\newtheorem*{keywords}{Keywords}
\newtheorem{thm}{Theorem}[section]
\newtheorem{eg}{Example}[section]
\newtheorem{rem}{Remark}[section]
\newtheorem{cor}{Corollary}[section]
\newtheorem{Nt}{Note}[section]
\newtheorem{Definition}{Definition}[section]
\begin{document}
	\title{Controllability and Observability of Heterogeneous Networked Systems with Non-Uniform Node Dimensions and Distinct Inner-Coupling Matrices}
	
	\author{Aleena Thomas, Abhijith Ajayakumar, and Raju K.George
		\thanks{The first and second authors are funded by the University Grants Commission, India and Council for Scientific and Industrial Research, India,  respectively.}
		\thanks{The authors are with the Department of Mathematics, Indian Institute of Space Science and Technology, Thiruvananthapuram, Kerala, India.\url{aleenathomas.22@res.iist.ac.in, abhijithajayakumar.19@res.iist.ac.in, george@iist.ac.in}}}

	\maketitle
	
	\begin{abstract}
		In this paper we extend the work in \cite{A.Thomas} wherein the controllability and observability of a heterogeneous networked system with distinct node dimensions were studied. This paper adds to \cite{A.Thomas} a necessary and sufficient condition for controllability of the networked system. The result demonstrates the dependence of controllability of the network on factors like network topology, inner interactions among nodes and nodal dynamics. The result is formulated by characterizing the left eigenvectors of the network state matrix. Another necessary and sufficient condition for controllability, which is a reformulation of the \textit{Popov-Belevitch-Hautus} controllability condition, a necessary and sufficient condition for observability of the networked system and certain necessary conditions for controllability of the networked system are the other results established in this paper. Variants of these results under certain specific network topologies like path, cycle, star and wheel are also discussed.
	\end{abstract}
	
	\begin{keywords}
		Controllability, Inner-Coupling matrices, Node dimensions, Heterogeneous dynamics, Linear Time Invariant systems, Networked Systems, Observability.
	\end{keywords}
	
	\section{Introduction}
	\label{sec:introduction}
	The study of Mathematical Theory of Control was initiated by R. E. Kalman in early 1960's (refer \cite{R.Kalman} and \cite{R.Kalman-2}). He characterized conditions for controllability and observability of stand-alone systems. Subsequently, control scientists have derived numerous equivalent conditions for these notions, the most remarkable of which are \textit{Popov-Belevitch-Hautus (PBH) eigenvector tests} and \textit{PBH rank tests}  for controllability and observability. Owing to the complexity of real-world systems, the notion of networked systems was introduced, which found applications in various fields like Gene Regulatory Networks \cite{M.Levine}, Power Grids \cite{H.Farhangi}, brain neuronal connections, the World Wide Web etc.\cite{D.S.Bassett}.\\
	
	The study of Linear Time Invariant (LTI),  Multi Input Multi Output (MIMO) networked systems was initiated by Wang \textit{et al.}\cite{L.Wang}. Networked systems in which all nodes follow the same dynamics are called homogeneous networked systems wheras, those with distinct dynamics are called heterogeneous networked systems. In Wang \textit{et al.} \cite{L.Wang}, homogeneous networked systems are considered; a necessary and sufficient condition for the controllability of the system, as well as some necessary conditions for controllability are presented. Further, Wang \textit{et al.}\cite{L.Wang-2} investigated the controllability of a homogeneous networked system, where communications are performed through one-dimensional channels. Wang \textit{et al.}'s\cite{L.Wang} necessary and sufficient condition is a reformulation of the PBH eigenvector test for controllability; verification of which is computationally demanding as it involves solving a system of matrix equations. Xiang \textit{et al.} \cite{L.Xiang} extended the work of Wang \textit{et al.}\cite{L.Wang} to networked systems with heterogeneous nodal dynamics and with the same inner-coupling matrices. Controllability condition in \cite{L.Xiang} does not hold true in general. Ajayakumar and George constructed a counter-example to establish this and proved a new result.  Hao \textit{et al.}'s\cite{Y.Hao} necessary and sufficient condition overcame the computational difficulty in Wang \textit{et al.}'s result for homogeneous networked systems wherein  individual nodes are connected in a diagonalizable network topology. This condition gives more information in the sense that it illustrates the dependence of the controllability of networked system on factors like individual node dynamics, inner interactions among nodes, and network topology. The result was formulated by characterizing the left eigenvectors of the network state matrix. Hao \textit{et al.}'s result was extended to heterogeneous networked systems under triangularizable network topology by Ajayakumar and George\cite{A.Ajayakumar-2}, \cite{A.Ajayakumar-3}. Controllability of heterogeneous networked systems with different inner-coupling matrices were studied, and a necessary and sufficient  condition for controllability was established by Kong \textit{et al.}\cite{Z.Kong}. In all these works, the dimensions of the individual nodes in the networked system are the same. We established conditions for controllability and observability for heterogeneous networked systems with distinct node dimensions in our previous work \cite{A.Thomas}. Apart from extending Wang \textit{et al.}'s\cite{L.Wang} results, in \cite{A.Thomas} we have obtained a necessary and sufficient  condition for the observability of the networked system. A survey of controllability of LTI networked systems can be found in \cite{A.Ajayakumar-4} and \cite{L.Xiang}.\\
	
	This paper extends the results in \cite{A.Thomas} by establishing another necessary and sufficient condition for the controllability of the networked system. This condition is more efficient as compared to the necessary and sufficient  condition obtained as a reformulation of the PBH condition, as it shows the interplay between    the controllability of the network and factors like network topology, inner-coupling among nodes and individual node dynamics. This is achieved by characterizing the left eigenvectors of the network state matrix. Special cases of this necessary and sufficient condition are also presented when the nodes are connected under path, cycle, star and wheel topologies. The results in this paper present the most general case in the scenario of networked systems, as all the system matrices, \textit{viz.} state, input, and output matrices, as well as the inner-coupling matrices, are  distinct and are of different dimensions.\\
	
	The paper is organized as follows: the  Introduction is followed by Section \ref{pre}, wherein the basics of controllability and observability of stand-alone systems are given. The model under consideration in this work is formulated in Section \ref{model}. Section \ref{res} forms the core of the paper, with all the main results stated here. The section gives two necessary and sufficient conditions for the controllability of the networked system, necessary and sufficient condition for the observability of the networked system and some necessary conditions for the controllability of the networked system. Variants of these results under certain specific network topologies are also presented. The results obtained are substantiated with numerical examples. Section \ref{con} concludes the paper, giving insights into future research directions.
	
	\section{Notations \& Preliminaries}\label{pre}
	
	\subsection{Notations}
	In this paper, $\mathbb{R}(\mathbb{C})$ denotes the set of all real (complex) numbers. $\mathbb{R}^{n}(\mathbb{C}^n)$ denotes the set of $n\times 1$ real (complex) vectors. $\mathbb{R}^{m\times n}(\mathbb{C}^{m\times n})$ denotes the set of all $m\times n$ real (complex) matrices. $diag\{a_1,\cdots, a_n\}$ denotes the $n\times n$ diagonal matrix with $a_1,\cdots, a_n$ as the diagonal entries and $blockdiag\{A_1,\cdots, A_N\}$ denotes the block-diagonal matrix with $A_1,\cdots, A_N$ as the diagonal blocks. $\otimes$ denotes the Kronecker product between matrices\cite{Horn-1}.
	
	\subsection{Control Theory}
	Consider the Linear Time-Invariant (LTI) system $\mathcal{S}$:
	\begin{align}
		\dot{x}(t)&=Ax(t)+Bu(t)\label{sa1}\\
		y(t)&=Cx(t)\label{sa2}
	\end{align} 
	where, $A\in \mathbb{R}^{n\times n}, B\in\mathbb{R}^{n\times p}, C\in\mathbb{R}^{m\times n}$ are the state, input and output matrices, respectively. Equation (\ref{sa1}) is the state equation and (\ref{sa2}) is the output equation.
	\begin{Definition}[\textbf{Controllability}]\cite{W.Terrell}
		The linear system (\ref{sa1}) is said to be \textbf{controllable} if, given any two states $x_0,x_f\in \mathbb{R}^n$ and a $t_f>0$, there exists an admissible control function $u(t)$, defined for $t\in [t_0,t_f]$, such that the corresponding solution of equation (\ref{sa1}) with initial condition $x(t_0)=x_0$ also satisfies $x(t_f)=x_f$.
	\end{Definition}
	\begin{Definition}[\textbf{Controllable Pair}]
		$(A,B)$ is said to be a controllable pair if the system (\ref{sa1}) is controllable.
	\end{Definition}
	The following are some of the fundamental necessary and sufficient conditions for controllability of the system (\ref{sa1}).
	\begin{thm}\cite{W.Terrell}\label{control}
		The LTI system $(A,B)$ is controllable if and only if one of the following conditions is satisfied:
		\begin{itemize}
			\item[i.] \textbf{Kalman's rank condition:}
			The controllability matrix 
			\begin{equation*}
				\mathcal{Q}(A,B)=rank[B|AB|\cdots|A^{n-1}B]
			\end{equation*} has full rank; i.e.,
			$rank[\mathcal{Q}(A,B)]=n$.
			\item[ii.] \textbf{PBH eigenvector test:}
			For every complex $\lambda$, the only $n\times 1$ vector $v$ that satisfies
			\begin{align*}
				v^*A=\lambda v^*,\ 
				v^*B=0
			\end{align*}
			is the zero vector $v=0$. Here $v^*$ is the conjugate transpose of $v$.

		\end{itemize}
		The PBH eigenvector test can be equivalently stated as follows:
		\begin{Nt}
			The matrix pair $(A,B)$ is controllable if and only if 
			$rank\begin{bmatrix}
				A-\lambda I &|&B
			\end{bmatrix}=n$ 
			for every eigenvalue $\lambda$ of $A$.
		\end{Nt}
	\end{thm}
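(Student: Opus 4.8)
The plan is to obtain this rank reformulation directly from the PBH eigenvector test established in Theorem \ref{control}(ii), which I take as given. The central idea is that the rank condition and the eigenvector condition are two descriptions of the same obstruction, namely the existence of a common left null vector. First I would observe that the matrix $\begin{bmatrix} A-\lambda I & | & B \end{bmatrix}$ has exactly $n$ rows and $n+p\geq n$ columns, so the condition $rank\begin{bmatrix} A-\lambda I & | & B \end{bmatrix}=n$ fails precisely when the $n$ rows are linearly dependent, i.e.\ when there exists a nonzero vector $v\in\mathbb{C}^n$ with $v^*\begin{bmatrix} A-\lambda I & | & B \end{bmatrix}=0$. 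Splitting this block equation into its two parts gives $v^*(A-\lambda I)=0$ and $v^*B=0$, equivalently $v^*A=\lambda v^*$ and $v^*B=0$. Thus for a fixed $\lambda$, the rank being strictly less than $n$ is logically identical to the existence of a nonzero $v$ violating the PBH eigenvector test at that $\lambda$.

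The second step is to justify restricting attention from ``every complex $\lambda$'' in the PBH test to ``every eigenvalue $\lambda$ of $A$'' in the rank condition. The key point here is that a nonzero $v$ with $v^*A=\lambda v^*$ is a left eigenvector of $A$, and such a vector exists only when $\lambda$ lies in the spectrum of $A$; indeed, taking conjugate transposes turns $v^*A=\lambda v^*$ into $A^*v=\overline{\lambda}\,v$, so $\lambda$ must be an eigenvalue of $A$ because the left and right spectra of a square matrix coincide. Consequently, for any $\lambda$ that is not an eigenvalue of $A$, the matrix $A-\lambda I$ is nonsingular, its rows are automatically independent, and $rank\begin{bmatrix} A-\lambda I & | & B \end{bmatrix}=n$ holds trivially with nothing to check.

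Combining the two steps yields the equivalence in both directions. If $(A,B)$ is controllable then, by Theorem \ref{control}(ii), no nonzero $v$ satisfies $v^*A=\lambda v^*$ and $v^*B=0$ for any $\lambda$; in particular the rank is full at every eigenvalue, giving one implication. Conversely, if the rank equals $n$ at every eigenvalue of $A$, then no offending left null vector exists at any eigenvalue, and by the first paragraph none can exist at a non-eigenvalue either, so the PBH eigenvector test is satisfied and $(A,B)$ is controllable. I do not anticipate a serious obstacle: the only substantive point is the spectral fact that $v^*A=\lambda v^*$ forces $\lambda$ to be an eigenvalue, which is precisely what legitimately collapses the uncountable family of conditions in the PBH test to the finite list of eigenvalues appearing in the rank condition.
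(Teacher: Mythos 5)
Your argument is correct and is the standard one: the paper itself offers no proof of this theorem (it is cited directly from Terrell), so there is nothing to diverge from. Your two observations --- that $rank\begin{bmatrix} A-\lambda I & | & B\end{bmatrix}<n$ for the $n\times(n+p)$ matrix is equivalent to the existence of a nonzero common left null vector $v$ satisfying $v^*A=\lambda v^*$ and $v^*B=0$, and that such a $v$ can only exist when $\lambda$ is an eigenvalue of $A$ (so the rank condition is automatic elsewhere) --- are exactly what is needed to pass between the PBH eigenvector test and its rank reformulation in the Note. The only caveat is one you already flag yourself: you establish the equivalence of (ii) with the Note while taking (ii) as given, rather than proving the full chain of equivalences with controllability, which is consistent with the paper's treatment of this as a quoted textbook result.
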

	\begin{Definition}[\textbf{Observability}]\cite{W.Terrell}
		The linear system (\ref{sa1})-(\ref{sa2}) is said to be \textbf{observable} if, for any $x(t_0)=x_0$, there is a finite time $t_f>0$ such that knowledge of the input $u(t)$ and the output $y(t)$ for $t\in[t_0,t_f]$ uniquely determines $x_0$.
	\end{Definition}
	
	\begin{Definition}[\textbf{Observable Pair}]
		$(C,A)$ is said to be an observable pair if the system (\ref{sa1})-(\ref{sa2}) is observable.
	\end{Definition}
	
	The following are some of the fundamental necessary and sufficient conditions for the observability of the system (\ref{sa1})-(\ref{sa2}).
	\begin{thm}\cite{W.Terrell}\label{obsv}
		The LTI system $(C,A)$ is observable if and only if one of the following conditions is satisfied:
		
		\begin{itemize}
			\item[i.] \textbf{Kalman's rank condition:}
			The observability matrix 
			\begin{equation*}
				\mathcal{O}(C,A)=rank\begin{pmatrix}C\\--\\CA\\--\\\vdots\\--\\CA^{n-1}\end{pmatrix}
			\end{equation*}
			has full rank; i.e., $rank[\mathcal{O}(C,A)]=n$.
			
			\item[ii.] \textbf{PBH eigenvector test:}
			For every complex $\lambda$, the only $n\times 1$ vector $v$ that satisfies
			\begin{align*}
				Av=\lambda v,\ 
				Bv=0
			\end{align*}
			is the zero vector $v=0$.
			
		\end{itemize}
	\end{thm}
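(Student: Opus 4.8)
The plan is to derive both characterizations from the duality between observability and controllability, so that Theorem~\ref{control} (already established for controllability) can be invoked directly rather than re-proved. Concretely, I would first show that the pair $(C,A)$ is observable if and only if the pair $(A^{T},C^{T})$ is controllable, and then transport each of the two controllability criteria through this equivalence.

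For the duality step, the key reduction is from observability to injectivity of the zero-input output map. Since the forced response $\int_{t_0}^{t}Ce^{A(t-s)}Bu(s)\,ds$ is completely determined by the known input $u$, knowledge of $y$ on $[t_0,t_f]$ pins down $x_0$ uniquely if and only if the linear map $x_0\mapsto Ce^{At}x_0$ is injective on $[t_0,t_f]$. This injectivity is equivalent to positive-definiteness of the observability Gramian $W_o=\int_{t_0}^{t_f}e^{A^{T}s}C^{T}Ce^{As}\,ds$, which is literally the controllability Gramian of $(A^{T},C^{T})$; hence observability of $(C,A)$ is equivalent to controllability of $(A^{T},C^{T})$.

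With the duality in hand, part (i) is immediate: transposing the observability matrix gives $\mathcal{O}(C,A)^{T}=[\,C^{T}\mid A^{T}C^{T}\mid\cdots\mid (A^{T})^{n-1}C^{T}\,]=\mathcal{Q}(A^{T},C^{T})$, and since transposition preserves rank, $\mathrm{rank}\,\mathcal{O}(C,A)=n$ if and only if $(A^{T},C^{T})$ is controllable by Theorem~\ref{control}(i). For part (ii), I would apply the controllability PBH test of Theorem~\ref{control}(ii) to $(A^{T},C^{T})$, namely the absence of a nonzero $v$ with $v^{*}A^{T}=\lambda v^{*}$ and $v^{*}C^{T}=0$. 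Taking conjugate transposes and using that $A$ and $C$ are real converts these into $Av=\bar\lambda v$ and $Cv=0$; as $\lambda$ ranges over $\mathbb{C}$ so does $\bar\lambda$, yielding exactly the stated eigenvector test for observability.

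The main obstacle I anticipate is making the duality reduction airtight rather than the algebra that follows it: one must justify carefully that observability coincides with injectivity of the zero-input map and that the associated Gramian is nonsingular precisely when no nonzero initial state is unobservable. An alternative that sidesteps the Gramian machinery is a purely algebraic route: differentiate $y(t)=Ce^{At}x_0$ repeatedly at $t=0$ and use the Cayley--Hamilton theorem to show $y\equiv 0$ iff $x_0\in\ker\mathcal{O}(C,A)$, which settles (i) directly, and then prove the equivalence of (i) and (ii) by observing that $\ker\mathcal{O}(C,A)$ is $A$-invariant and hence, if nontrivial, contains an eigenvector $v$ of $A$ with $Cv=0$. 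I would keep this algebraic argument in reserve in case invoking the Gramian is deemed to rely on tools beyond those developed in the preliminaries.
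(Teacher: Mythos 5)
Your proof is correct, but note that the paper does not actually prove this statement: Theorem~\ref{obsv} is a standard preliminary quoted from Terrell \cite{W.Terrell} with no argument supplied, so there is no in-paper proof to compare against. Your duality route ($(C,A)$ observable iff $(A^{T},C^{T})$ controllable, then transporting both criteria of Theorem~\ref{control}) is the classical way to obtain it, and the two details you flag as needing care are exactly the right ones: the reduction of observability to injectivity of $x_{0}\mapsto Ce^{At}x_{0}$ (legitimate because the forced response is known), and the link between that injectivity and nonsingularity of the Gramian, which in turn requires the standard fact that controllability of a pair is equivalent to positive definiteness of its controllability Gramian --- a criterion not listed in Theorem~\ref{control}, so you would either have to establish it or fall back on your reserve algebraic argument (repeated differentiation of $y(t)=Ce^{At}x_{0}$ plus Cayley--Hamilton for part (i), and $A$-invariance of $\ker\mathcal{O}(C,A)$ to extract an eigenvector for part (ii)), which is self-contained and arguably cleaner here. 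Your conjugate-transpose bookkeeping for the PBH step ($Av=\bar\lambda v$, $Cv=0$, with $\bar\lambda$ ranging over all of $\mathbb{C}$) is right, and incidentally it exposes a typo in the paper's statement: the eigenvector test for observability should read $Cv=0$, not $Bv=0$.
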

	
	\section{Model Formulation}\label{model}
	
	In this paper we consider the controllability and observability of a networked system with $N$ nodes, with each node having distinct dimension. Let the state, input and output vectors of the $i-$th node be denoted by $x_i, u_i$ and $y_i$,   respectively. Let $H_i$ denote the inner-coupling matrix for node $i$, which determines the inner interactions among the nodes. The individual node dynamics of the $i$-th node is given by:
	\begin{align}
		\dot{x_i}&=A_ix_i+\sum_{j=1}^{N}\beta_{ij}H_iy_j+\delta_iB_iu_i\label{sys1}\\
		y_i&=C_ix_i\label{sys2}
	\end{align}
	That is, 
	\begin{equation}
		\dot{x_i}=A_ix_i+\sum_{j=1}^{N}\beta_{ij}H_iC_jx_j+\delta_iB_iu_i
	\end{equation}
	Here, equation (\ref{sys1}) is called the state equation and equation (\ref{sys2}), the output equation. Also, $x_i\in \mathbb{R}^{n_i},y_i\in \mathbb{R}^m, u_i\in \mathbb{R}^{p_i}, A_i\in \mathbb{R}^{n_i\times n_i}, H_i\in \mathbb{R}^{n_i\times m}, C_i\in \mathbb{R}^{m\times n_i}, B_i\in \mathbb{R}^{n_i\times p_i},\forall\, i, 1\leq i\leq N$ and $\delta_i$ denotes the presence parameter of external control input to node $i$. That is,
	\begin{equation*}
		\delta_i=\begin{dcases}
			1& ;\textit{if node $i$ has external control input}\\
			0& ;otherwise
		\end{dcases}
	\end{equation*}
	The nodes are connected in a network topology $(L,\Delta)$, where the network topology matrix $L=[\beta_{ij}]_{i,j=1}^{N}$,
	\begin{equation*}
		\beta_{ij}\begin{dcases}
			\neq 0 &;\textit{if there is an edge from node $j$ to node $i$}\\
			= 0& ;otherwise
		\end{dcases}
	\end{equation*} 
	and the external input channel matrix $\Delta=diag\{\delta_1,\cdots,\delta_N\}$. The dynamics of the networked system is given by the following compact form:
	\begin{align}
		\dot{X}&=\mathcal{A} X+\mathcal{B} U \label{eq2}\\
		Y&=\mathcal{C}X \label{eq2-1}
	\end{align}
	where,
	\begin{align*}
		\mathcal{A}&=A+\Gamma \left(A=blockdiag\{A_1,...,A_N\}, \Gamma=(\beta_{ij}H_iC_j)_{i,j=1}^{N}\right)\\
		\mathcal{B}&=blockdiag\{\delta_1B_1,\delta_2B_2,...,\delta_NB_N\}\\
		\mathcal{C}&=blockdiag\{C_1,C_2,...,C_N\}
	\end{align*}
	
	\noindent Here, the state matrix $\mathcal{A}$ is an $\mathcal{N}\times \mathcal{N}$ matrix, where $\mathcal{N}=\sum_{i=1}^{N}n_i$ and the input matrix $\mathcal{B}$ is an $\mathcal{N}\times \mathcal{P}$ matrix, where $\mathcal{P}=\sum_{i=1}^{N}p_i$. When all the nodes have same dimension and the system matrices coincide, the networked system is homogeneous.
	
	\section{Main Results}\label{res}
	
	In this section, we give necessary and sufficient conditions for controllability and observability of the networked system and some easily verifiable necessary conditions for controllability of the networked system. These results give insights into the dependence of controllability and observability of the networked system on factors like network topology, interaction among nodes, individual node dynamics, and external control inputs.
	
	\subsection{Necessary and Sufficient Conditions for Controllability of the Networked System}
	
	The first theorem is a necessary and sufficient condition for controllability of the networked system, which is a reformulation of the PBH eigenvector test for controllability.
	\begin{thm}\cite{A.Thomas}\label{thm1}
		The networked system \eqref{eq2} is controllable if and only if for any $\lambda \in \mathbb{C}$, the system of matrix equations
		\begin{align}
			&v_i(\lambda I_{n_i}-A_i)-\sum_{j=1}^{N}\beta_{ji}v_jH_jC_i=0 \label{eq-cdtn-1}\\
			&\delta_iv_iB_i=0, i=1,...,N \label{eq-cdtn-2}
		\end{align} has only the zero solution.
	\end{thm}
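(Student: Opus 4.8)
The plan is to obtain the stated system of matrix equations as the block-partitioned form of the PBH eigenvector test applied to the pair $(\mathcal{A},\mathcal{B})$. By Theorem \ref{control}(ii), the networked system \eqref{eq2} is controllable if and only if, for every $\lambda\in\mathbb{C}$, the only row vector $v^*$ satisfying $v^*\mathcal{A}=\lambda v^*$ and $v^*\mathcal{B}=0$ is $v^*=0$. Since $\mathcal{A}$ is $\mathcal{N}\times\mathcal{N}$ with $\mathcal{N}=\sum_{i=1}^N n_i$, I would partition $v^*$ conformally with the node dimensions as $v^*=(v_1,\ldots,v_N)$, where each block $v_i\in\mathbb{C}^{1\times n_i}$. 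The whole argument then reduces to rewriting the two PBH conditions block by block and observing that $v^*=0$ is equivalent to $v_1=\cdots=v_N=0$.

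For the input condition, I would use that $\mathcal{B}=blockdiag\{\delta_1B_1,\ldots,\delta_NB_N\}$, so that $v^*\mathcal{B}=(\delta_1v_1B_1,\ldots,\delta_Nv_NB_N)$; setting this to zero yields exactly \eqref{eq-cdtn-2}. For the eigenvector condition, I would substitute $\mathcal{A}=A+\Gamma$ with $A=blockdiag\{A_1,\ldots,A_N\}$ and $\Gamma=(\beta_{ij}H_iC_j)_{i,j=1}^N$. The block-diagonal part contributes $v_iA_i$ to the $i$-th block of $v^*\mathcal{A}$, while left-multiplication by the coupling matrix $\Gamma$ contributes $\sum_{j}\beta_{ji}v_jH_jC_i$ to that same block. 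Collecting the $i$-th block of $v^*\mathcal{A}=\lambda v^*$ and rearranging then gives precisely \eqref{eq-cdtn-1}.

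I expect the only delicate point to be the index bookkeeping in the coupling term: because $v^*$ acts on $\Gamma$ from the \emph{left}, the $i$-th block of the product collects the $(j,i)$ blocks $\beta_{ji}H_jC_i$ of $\Gamma$ (summed over $j$), which is why the coefficient appearing in \eqref{eq-cdtn-1} is $\beta_{ji}$ rather than $\beta_{ij}$. Keeping the row/column roles straight here is the main thing to verify carefully; the rest is routine block arithmetic. Finally, I would conclude by noting that the PBH requirement that $v^*=0$ be the only solution is, after the partition, identical to the requirement that \eqref{eq-cdtn-1}--\eqref{eq-cdtn-2} admit only the trivial solution $v_1=\cdots=v_N=0$, thereby establishing the equivalence for every $\lambda\in\mathbb{C}$.
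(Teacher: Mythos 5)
Your proposal follows exactly the paper's argument: apply the PBH eigenvector test to $(\mathcal{A},\mathcal{B})$, partition the left eigenvector conformally with the node dimensions, and read off equations \eqref{eq-cdtn-1}--\eqref{eq-cdtn-2} block by block, with the correct observation that left-multiplication transposes the roles of the indices so that $\beta_{ji}$ appears. The proof is correct and essentially identical to the one in the paper (which, if anything, is terser on the index bookkeeping that you rightly flag as the only delicate point).
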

	\begin{proof}
		By the PBH eigenvector test (Theorem \ref{control}(ii)), $(\mathcal{A},\mathcal{B})$ is controllable if and only if $v=0$ is the only solution to the system of matrix equations
		\begin{align*}
			v^*\mathcal{A}=\lambda v^*,\ 
			v^*\mathcal{B}=0
		\end{align*}
		Let $v=[v_1\,v_2\,...\,v_N]^T$ with $v_i\in \mathbb{C}^{1\times n_i}$, the above equations can be rewritten as:
		\begin{align*}
			[v_1\,v_2\,...\,v_N](\lambda I_{n_i}-A-\Gamma)&=0\\
			[v_1\,v_2\,...\,v_N]\mathcal{B}&=0
		\end{align*}
		which is equivalent to:
		\begin{align*}
			v_i(\lambda I_{n_i}-A_i)-\sum_{j=1}^{N}\beta_{ji}v_jH_jC_i&=0\\
			\delta_iv_iB_i&=0
		\end{align*}
	\end{proof}
	\begin{eg}\label{eg1}
		\begin{justify}
			Consider the networked system with the following nodes
			\begin{align*}
				A_1=\begin{pmatrix}
					1&2\\1&0
				\end{pmatrix}, A_2=\begin{pmatrix}
					1&0&0\\3&2&0\\4&1&-1
				\end{pmatrix},
				B_1=\begin{pmatrix}
					0\\1
				\end{pmatrix}, B_2=\begin{pmatrix}
					1\\2\\-1
				\end{pmatrix}\\
				C_1=\begin{pmatrix}
					1&-1
				\end{pmatrix}, C_2=\begin{pmatrix}
					1&-1&1
				\end{pmatrix},
				H_1=\begin{pmatrix}
					0\\-1	\end{pmatrix}, H_2=\begin{pmatrix}
					1\\1\\1
				\end{pmatrix}
			\end{align*} 
			
			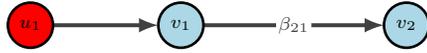
\begin{figure}[h!]\label{fig1}
				\center
				
				\begin{tikzpicture}
					\Vertex[x=1,y=0,label=$v_1$]{a1}
					\Vertex[x=4,y=0,label=$v_2$]{a2}
					\Vertex[x=-1,y=0,color=red,label=$u_1$]{b1}
					\Edge[Direct](b1)(a1)
					\Edge[Direct,label=$\beta_{21}$](a1)(a2)
				\end{tikzpicture} 
				\caption{Controllable networked system with dynamics as in Example \ref{eg1}}
			\end{figure}
			Choose $\beta_{ij}$ and $\delta_i$ such that
			$L=\begin{pmatrix}
				0&0\\1&0
			\end{pmatrix}$ and 
			$\Delta=\begin{pmatrix}
				1&0\\0&0
			\end{pmatrix}$.	Then, the networked system can be written in the compact form \eqref{eq2} with
			$$\mathcal{A}=\begin{pmatrix}
				1&2&0&0&0\\1&0&0&0&0\\1&-1&1&0&0\\1&-1&3&2&0\\1&-1&4&1&-1
			\end{pmatrix}, \mathcal{B}=\begin{pmatrix}
				0&0\\1&0\\0&0\\0&0\\0&0
			\end{pmatrix}$$ 
			By Kalman's rank condition (Theorem \ref{control} (i)), the networked system is controllable. Now, let us verify Theorem \ref{thm1}. Let $v_1=\begin{pmatrix}
				v_{11}&v_{12}
			\end{pmatrix}\in \mathbb{C}^{1\times 2}$ and $v_2=\begin{pmatrix}
				v_{21}&v_{22}&v_{23}
			\end{pmatrix}\in \mathbb{C}^{1\times 3}$ satisfy \eqref{eq-cdtn-1}-\eqref{eq-cdtn-2}:
			\begin{align*}
				v_1(\lambda I_2-A_1)-v_2H_2C_1&=0\\
				v_1B_1&=0\\
				v_2(\lambda I_3-A_2)&=0
			\end{align*} 
			for $\lambda \in \mathbb{C}$. This gives rise to the following system of equations:
			\begin{align*}
				(\lambda-1)v_{11}+v_{12}-v_{21}-v_{22}-v_{23}&=0\\
				2v_{11}+\lambda v_{12}+v_{21}+v_{22}+v_{23}&=0\\
				v_{12}&=0\\
				(\lambda-1)v_{21}+3v_{22}+4v_{23}&=0\\
				(\lambda-2)v_{22}+v_{23}&=0\\
				(\lambda+1)v_{23}&=0
			\end{align*} 
			Clearly, $v=(v_1,v_2)=0$ is the only solution.
		\end{justify}
	\end{eg}
	
	For a heterogeneous networked system with same node dimensions and inner-coupling matrices, i.e.,
	\begin{align}
		\dot{x_i}&=A_ix_i+\sum_{j=1}^{N}\beta_{ij}HC_jx_j+\delta_iB_iu_i\label{het2-1}
	\end{align}
	Xiang \textit{et al.}\cite{L.Xiang} established a necessary and sufficient condition for controllability, which becomes a special case of Theorem \ref{thm1} as stated in the following corollary.
	
	\begin{cor}[Xiang et al., Theorem 1, \cite{L.Xiang}]
		The networked system with individual node dynamics as in \eqref{het2-1} is controllable if and only if
		\begin{align}
			v_i^*(\lambda I_{n}-A_i)-\sum_{j=1,j\neq i}^{N}\beta_{ji}v_jHC_i=0\\
			\delta_iv_i^*B_i=0, i=1,...,N
		\end{align} 
		has a unique solution $v_i=0$ for any complex number $\lambda$ and for all $i=1,\cdots,N$.
	\end{cor}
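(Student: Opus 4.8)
The plan is to obtain this corollary as a direct specialization of Theorem \ref{thm1}, since the system \eqref{het2-1} is exactly the networked system \eqref{eq2} under two restrictions: every node shares the common dimension $n_i = n$, and every inner-coupling matrix coincides, $H_i = H$ for all $i$. Accordingly, I would begin by invoking Theorem \ref{thm1}, which already supplies a necessary and sufficient condition for controllability in the fully heterogeneous setting, and then substitute these two equalities into the system of matrix equations \eqref{eq-cdtn-1}--\eqref{eq-cdtn-2}. No new controllability argument is needed; the content is entirely in carrying out and justifying the substitution.

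Concretely, replacing $n_i$ by $n$ and each $H_j$ by $H$ in \eqref{eq-cdtn-1} turns the first family of equations into
\[
v_i(\lambda I_n - A_i) - \sum_{j=1}^{N} \beta_{ji}\, v_j H C_i = 0,
\]
while \eqref{eq-cdtn-2} is carried over unchanged. The only structural difference between this and the statement of the corollary lies in the range of summation: Theorem \ref{thm1} sums over all $j$, whereas the corollary excludes the index $j = i$. I would reconcile this by appealing to the standing convention that the network topology carries no self-loops, i.e. $\beta_{ii} = 0$ for every $i$, which is precisely the hypothesis imposed on the model \eqref{het2-1}. Under this convention the diagonal term $\beta_{ii}\, v_i H C_i$ vanishes identically, so the summation over $1 \le j \le N$ collapses to the summation over $j \ne i$, matching the corollary verbatim.

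A final cosmetic step is to reconcile notation. In Theorem \ref{thm1} each $v_i$ is taken as a row vector in $\mathbb{C}^{1\times n_i}$, whereas the corollary writes $v_i^*$ for the conjugate transpose of a column vector $v_i$. Identifying the row vector of Theorem \ref{thm1} with $v_i^*$ renders the two systems literally identical, and since the conclusion ``$v = 0$ is the only solution'' is equivalent to ``$v_i = 0$ for all $i$'', the two phrasings of the conclusion coincide for every $\lambda \in \mathbb{C}$.

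Because the result is a specialization rather than a fresh argument, I do not anticipate a genuinely hard step; the sole point demanding care is the treatment of the diagonal ($j = i$) term. I therefore expect the main obstacle to be purely expository, namely making explicit the no-self-loop assumption $\beta_{ii}=0$ that allows the full summation of Theorem \ref{thm1} to be rewritten as the restricted summation appearing in Xiang \textit{et al.}'s formulation, and verifying that this is indeed the assumption underlying their model \eqref{het2-1}.
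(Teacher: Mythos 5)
Your proposal is correct and matches the paper's treatment: the paper states this corollary without a separate proof, presenting it exactly as the specialization of Theorem \ref{thm1} to $n_i=n$ and $H_i=H$ that you carry out. Your explicit handling of the $j=i$ term via the no-self-loop convention $\beta_{ii}=0$ is a point the paper leaves implicit (note that \eqref{het2-1} as printed sums over all $j$, so without $\beta_{ii}=0$ the corollary's $A_i$ would have to be read as $A_i+\beta_{ii}HC_i$), and your reconciliation is the right one.
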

	Wang \textit{et al.} \cite{L.Wang} studied the homogeneous system, i.e., $n_i=n\ \text{and}\ A_i=A, B_i=B, C_i=C, H_i=H \,\ \forall\, i$. That is,
	\begin{align}
		\dot{x_i}&=Ax_i+\sum_{j=1}^{N}\beta_{ij}Hy_j+\delta_iBu_i\label{wang}\\
		y_i&=Cx_i\label{wang2}
	\end{align}
	The necessary and sufficient condition obtained in\cite{L.Wang} becomes a special case of Theorem \ref{thm1} as stated in the following corollary.
	\begin{cor} [Wang \textit{et al.}, Theorem 2, \cite{L.Wang}]
		The homogeneous networked system \eqref{wang}-\eqref{wang2} is controllable if and only
		if, for any $\lambda \in \mathbb{C}$,  $F =0$ is the only matrix solution of the simultaneous equations:
		\begin{align}
			\Delta^TFB =0\ \text{and}\ 
			L^TFHC=F(\lambda I-A)
		\end{align}
	\end{cor}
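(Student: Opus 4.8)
The plan is to obtain this corollary as a direct specialization of Theorem \ref{thm1}, repackaging the system of row-vector equations there as a single matrix equation in an unknown $F$. Setting $n_i=n$, $A_i=A$, $B_i=B$, $C_i=C$, $H_i=H$ for all $i$, conditions \eqref{eq-cdtn-1}--\eqref{eq-cdtn-2} become
\begin{align*}
    v_i(\lambda I_n-A)-\sum_{j=1}^{N}\beta_{ji}v_jHC=0,\qquad \delta_iv_iB=0,\quad i=1,\dots,N,
\end{align*}
with each $v_i\in\mathbb{C}^{1\times n}$. The key device is to assemble these row vectors as the rows of an $N\times n$ matrix $F$, i.e.\ the $i$-th row of $F$ is $v_i$. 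Then $F=0$ if and only if every $v_i=0$, so controllability of the networked system is equivalent to $F=0$ being the only solution of the assembled equations.

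First I would translate the dynamical equation. Writing $L=[\beta_{ij}]$, its transpose satisfies $(L^T)_{ij}=\beta_{ji}$, so the $i$-th row of $L^TF$ is exactly $\sum_{j=1}^{N}\beta_{ji}v_j$. Right-multiplying by $HC$ shows that the $i$-th row of $L^TFHC$ equals $\sum_{j=1}^{N}\beta_{ji}v_jHC$, while the $i$-th row of $F(\lambda I-A)$ is $v_i(\lambda I-A)$. Hence, reading off rows, the entire collection of first equations over $i=1,\dots,N$ is precisely the single matrix identity $L^TFHC=F(\lambda I-A)$.

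Next I would translate the input equation. Since $\Delta=diag\{\delta_1,\dots,\delta_N\}$ is diagonal, $\Delta^T=\Delta$, and the $i$-th row of $\Delta^TF$ is $\delta_iv_i$; right-multiplying by $B$ makes $\delta_iv_iB$ the $i$-th row of $\Delta^TFB$. Thus the equations $\delta_iv_iB=0$ for all $i$ are jointly equivalent to $\Delta^TFB=0$. Combining the two translations, the system of Theorem \ref{thm1} has only the trivial solution for every $\lambda\in\mathbb{C}$ if and only if $F=0$ is the only matrix satisfying $\Delta^TFB=0$ and $L^TFHC=F(\lambda I-A)$, which is the stated condition.

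The computations are entirely routine; the only point requiring care—and the step I expect to be the main source of error rather than genuine difficulty—is the transpose bookkeeping. One must verify that the coefficient $\beta_{ji}$ in Theorem \ref{thm1} corresponds to $L^T$ rather than $L$ once the $v_i$ are arranged as the rows (not columns) of $F$, and correspondingly that the diagonal structure of $\Delta$ is what lets $\Delta^T$ stand in for $\Delta$ in the input equation without changing the condition.
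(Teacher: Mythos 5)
Your proposal is correct and follows exactly the paper's own route: the paper likewise forms $F$ from the rows $v_i$ and observes that \eqref{eq-cdtn-1} and \eqref{eq-cdtn-2} become $F(\lambda I_n-A)-L^TFHC=0$ and $\Delta^TFB=0$, so the corollary follows from Theorem \ref{thm1}. Your version merely spells out the transpose bookkeeping that the paper leaves implicit.
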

	\begin{proof}
		Let $F=\begin{bmatrix}
			v_1&v_2&\ldots&v_N
		\end{bmatrix}$ where $v_i$ is as in Theorem \ref{thm1}. 
		\begin{align*}
			\eqref{eq-cdtn-1}&\implies F(\lambda I_n-A)-L^TFHC=0\\
			\eqref{eq-cdtn-2}&\implies \Delta^T FB=0
		\end{align*}
		Hence the corollary follows from Theorem \ref{thm1}.
	\end{proof}
	Theorem \ref{thm1} gives little information about the dependence of network controllability on factors like network topology, inner interactions among nodes, and individual node dynamics. Since it is computationally demanding as it involves solving higher-order matrix equations, we have formulated another necessary and sufficient condition for network controllability which requires  properties of the left eigenvectors of $\mathcal{A}$.
	
	\begin{thm}\label{thm2}
		Consider the networked system \eqref{eq2}-\eqref{eq2-1}. Let $\sigma(A_{i}+\beta_{ii}H_{i}C_{i})=\{ \mu_{i}^{1}, \mu_{i}^{2},\cdots,\mu_{i}^{q_i}\}, 1\leq i\leq N$ and $\zeta_{ij}^k$ denote the left eigenvectors of $A_{i}+\beta_{ii}H_{i}C_{i}$ corresponding to the eigenvalue $\mu_i^j$, where, $1\leq k\leq \gamma_{ij}$, $\gamma_{ij}$ denotes the geometric multiplicity of $\mu_i^j$ as an eigenvalue of $A_{i}+\beta_{ii}H_iC_i$. Suppose whenever $\beta_{ij}\neq 0, H_{i}C_{j}=0, \forall\, i>j$ and $\zeta_{ij}^k$ is orthogonal to $H_{i}C_{j}, \forall \,i<j$, $k=1,\cdots,\gamma_{ij}$. Then $\sigma(\mathcal{A})=\bigcup_{i=1}^{N}\sigma(A_i+\beta_{ii}H_iC_i)$ and the left eigenvectors of $\mathcal{A}$, corresponding to the eigenvalue $\mu_{i}^{j}$ are precisely $\{\eta_{ij}^{1},\cdots,\eta_{ij}^{\gamma_{ij}}\}$, where $\eta_{ij}^k=[\underbrace{0}_{1\times n_1}\cdots\underbrace{0}_{ 1\times n_{i-1}} \ \underbrace{\zeta_{ij}^{k}}_{1\times n_i}\ \underbrace{0}_{1\times n_{i+1}}\cdots \underbrace{0}_{1\times n_N}], 1\leq k\leq \gamma_{ij}$.
	\end{thm}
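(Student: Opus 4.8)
The plan is to exploit the block structure that the two hypotheses force on $\mathcal{A}$. Write $M_i := A_i+\beta_{ii}H_iC_i$ for the $i$-th diagonal block, and recall that the $(i,\ell)$ block of $\mathcal{A}$ equals $\beta_{i\ell}H_iC_\ell$ for $i\neq\ell$. First I would observe that the first hypothesis kills every strictly lower block: for $i>\ell$, either $\beta_{i\ell}=0$, or $\beta_{i\ell}\neq 0$ and then $H_iC_\ell=0$, so $\beta_{i\ell}H_iC_\ell=0$ in all cases. Hence $\mathcal{A}$ is block upper-triangular with diagonal blocks $M_1,\dots,M_N$. Since $\det(\lambda I_{\mathcal{N}}-\mathcal{A})=\prod_{i=1}^{N}\det(\lambda I_{n_i}-M_i)$ for such a matrix, the spectrum statement $\sigma(\mathcal{A})=\bigcup_{i}\sigma(M_i)$ follows immediately.

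Next I would verify directly that each $\eta_{ij}^k$ is a left eigenvector of $\mathcal{A}$ for $\mu_i^j$. Because only the $i$-th block of $\eta_{ij}^k$ is nonzero, the $\ell$-th block of $\eta_{ij}^k\mathcal{A}$ is $\zeta_{ij}^k\mathcal{A}_{i\ell}$. For $\ell=i$ this equals $\zeta_{ij}^k M_i=\mu_i^j\zeta_{ij}^k$; for $\ell<i$ the block $\mathcal{A}_{i\ell}$ vanishes; and for $\ell>i$ it equals $\beta_{i\ell}\,\zeta_{ij}^k H_iC_\ell$, which is zero by the orthogonality hypothesis. Thus $\eta_{ij}^k\mathcal{A}=\mu_i^j\eta_{ij}^k$, and $\eta_{ij}^1,\dots,\eta_{ij}^{\gamma_{ij}}$ are linearly independent because their nonzero blocks $\zeta_{ij}^1,\dots,\zeta_{ij}^{\gamma_{ij}}$ are a basis of the left eigenspace of $M_i$ for $\mu_i^j$. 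This step is where orthogonality is essential: without it the coupling terms in the upper blocks would be nonzero and the node-localized vectors would fail to be eigenvectors.

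The main work, and the step I expect to be the genuine obstacle, is the converse ``precisely'' claim that no further left eigenvectors exist. For this I would take an arbitrary left eigenvector $\eta=[\eta_1\ \cdots\ \eta_N]$ of $\mathcal{A}$ for an eigenvalue $\mu$ and read off, block by block from the upper-triangular structure, the triangular system $\eta_\ell(M_\ell-\mu I_{n_\ell})=-\sum_{i<\ell}\beta_{i\ell}\,\eta_i H_iC_\ell$. Solving upward, the base case $\ell=1$ gives $\eta_1(M_1-\mu I)=0$, so $\eta_1$ is $0$ or a left eigenvector of $M_1$; inductively, once $\eta_1,\dots,\eta_{\ell-1}$ are each $0$ or a left eigenvector of their block, the orthogonality hypothesis annihilates every term $\eta_iH_iC_\ell$ on the right, leaving $\eta_\ell(M_\ell-\mu I)=0$. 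Hence each $\eta_\ell$ lies in the left eigenspace of $M_\ell$ for $\mu$ (or is zero), so $\eta$ is a combination of the localized vectors, and the left eigenspace for $\mu$ is exactly the direct sum, over the diagonal blocks having $\mu$ in their spectrum, of the embedded eigenspaces of the $M_\ell$. The one delicate point is that when $\mu$ is shared by several diagonal blocks the eigenspace picks up contributions from all of them; the stated conclusion is recovered exactly when $\mu_i^j$ is an eigenvalue of $M_i$ alone, which I would either assume (distinct diagonal spectra) or record as the precise scope of the characterization.
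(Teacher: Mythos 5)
Your argument follows the same route as the paper's proof: block upper-triangularity from the first hypothesis gives the spectrum claim, direct computation (using orthogonality to kill the blocks $\beta_{i\ell}\zeta_{ij}^kH_iC_\ell$ with $\ell>i$) shows each $\eta_{ij}^k$ is a left eigenvector, and a block-by-block reading of $\eta\mathcal{A}=\mu\eta$ handles the converse. Your version is, however, more careful than the paper's in two places, and the second one matters. First, the paper's one-line computation of $\eta_{ij}^k\mathcal{A}$ silently uses the orthogonality hypothesis to annihilate the superdiagonal blocks; you make that explicit. Second, and more importantly, your closing caveat identifies a genuine defect in the ``precisely'' claim: the paper's proof jumps from ``each $v_i$ is zero or a left eigenvector of $A_i+\beta_{ii}H_iC_i$'' to ``$v=\eta_{ij}^k$,'' which fails when an eigenvalue is shared by several diagonal blocks, since then a sum of localized eigenvectors from different blocks is again a left eigenvector but has more than one nonzero block. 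This is not hypothetical: in the paper's own Example 4.2 the eigenvalue $1$ lies in both $\sigma(A_1)$ and $\sigma(A_3)$, and $\eta_{11}^1+\eta_{32}^1$ is a left eigenvector of $\mathcal{A}$ not of the stated form. The correct conclusion, as you note, is that the $\eta_{ij}^k$ \emph{span} the left eigenspace of $\mathcal{A}$ for each eigenvalue (equivalently, the left eigenspace is the direct sum of the embedded eigenspaces of the blocks sharing that eigenvalue); this weaker statement is all that the subsequent controllability theorem actually uses, so your formulation is the one that should be recorded.
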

	\begin{proof}
		For the networked system $(\mathcal{A}, \mathcal{B})$, $\mathcal{A}$ is given by
		$$\begin{pmatrix}
			A_1+\beta_{11}H_1C_1&\beta_{12}H_1C_2&\cdots&\beta_{1N}H_1C_N\\
			\beta_{21}H_2C_1&A_2+\beta_{22}H_2C_2&\cdots&\beta_{2N}H_2C_N\\
			\vdots&\vdots&\ddots&\vdots\\
			\beta_{N1}H_NC_1&\beta_{N2}H_NC_2&\cdots&A_N+\beta_{NN}H_NC_N\\
		\end{pmatrix}$$
		If $H_iC_j=0$ whenever $\beta_{ij}\neq 0$ $\forall \,i>j$, $\mathcal{A}$ becomes 
		$$\begin{pmatrix}
			A_1+\beta_{11}H_1C_1&\beta_{12}H_1C_2&\cdots&\beta_{1N}H_1C_N\\
			0&A_2+\beta_{22}H_2C_2&\cdots&\beta_{2N}H_2C_N\\
			\vdots&\quad&\ddots&\vdots\\
			0&0&\cdots&A_N+\beta_{NN}H_NC_N\\
		\end{pmatrix}$$ 
		Here $\mathcal{A}$ is a block upper triangular matrix with $A_i+\beta_{ii}H_iC_i, 1\leq i\leq N$ as the diagonal blocks. Hence, $$\sigma(\mathcal{A})=\bigcup_{i=1}^{N}\sigma(A_i+\beta_{ii}H_iC_i) =\{\mu_{1}^1,\cdots,\mu_{1}^{q_1},\cdots,\mu_{N}^1,\cdots,\mu_{N}^{q_N}\} $$
		Let $\zeta_{ij}^{k}, k=1,\cdots,\gamma_{ij}$ denote the left eigenvectors of $A_i+\beta_{ii}H_iC_i$, corresponding to the eigenvalue $\mu_{i}^j$. Take  $\eta_{ij}^k=[\underbrace{0}_{1\times n_1}\cdots\underbrace{0}_{ 1\times n_{i-1}} \ \underbrace{\zeta_{ij}^{k}}_{1\times n_i}\ \underbrace{0}_{1\times n_{i+1}}\cdots \underbrace{0}_{1\times n_N}], 1\leq k\leq \gamma_{ij}$. Then,
		\begin{align*}\eta_{ij}^k\mathcal{A}&=\begin{bmatrix}
				0&\cdots&0&\zeta_{ij}^k&0&\cdots&0
			\end{bmatrix}\mathcal{A}\\
			&=\begin{bmatrix}
				0&\cdots&0&\zeta_{ij}^k(A_i+\beta_{ii}H_iC_i)&0&\cdots&0
			\end{bmatrix}\\
			&=\begin{bmatrix}
				0&\cdots&0&\mu_{i}^{j}\zeta_{ij}^k&0&\cdots&0
			\end{bmatrix}=\mu_{i}^{j}\eta_{ij}^k\end{align*} 
		hence $\eta_{ij}^k$ are left eigenvectors of $\mathcal{A}$ for $1\leq i\leq N, 1\leq j\leq q_i, 1\leq k\leq \gamma_{ij}$.\\
		
		\noindent Now, we will prove that the only left eigenvectors of $\mathcal{A}$ are in the form $\eta_{ij}^k=[\underbrace{0}_{1\times n_1}\cdots\underbrace{0}_{ 1\times n_{i-1}} \ \underbrace{\zeta_{ij}^{k}}_{1\times n_i}\ \underbrace{0}_{1\times n_{i+1}}\cdots \underbrace{0}_{1\times n_N}], 1\leq k\leq \gamma_{ij}$. Let $v\in \mathbb{R}^{1\times \mathcal{N}}$ be a left eigenvector of $\mathcal{A}$ corresponding to some eigenvalue $\mu_{i}^j$. Let $v=\begin{bmatrix}
			v_1&\cdots&v_N
		\end{bmatrix}, v_i\in \mathbb{R}^{1\times n_i}$.
		From $v\mathcal{A}=\mu_{i}^jv$, we get the following equations
		\begin{align*}
			v_1(A_1+\beta_{11}H_1C_1)&=\mu_{i}^j v_1\tag{i}\\
			\beta_{12}v_1H_1C_2+v_2(A_2+\beta_{22}H_2C_2)&=\mu_{i}^j v_2\tag{ii}\\
			&\vdots\nonumber\\
			\beta_{1N}v_1H_1C_N+\cdots+v_N(A_N+\beta_{NN}H_NC_N)&=\mu_{i}^jv_N \tag{N}
		\end{align*}
		The $i-$th equation, $1\leq i\leq N$ implies that either $v_i=0$ or $v_i$ is a left eigenvector of $A_i+\beta_{ii}H_iC_i$ corresponding to the eigenvalue $\mu_{i}^j$ as $\zeta_{ij}^k$ is orthogonal to $H_{i}C_{j}$; i.e., $v_i=\zeta_{ij}^k$ for some $k,1\leq k\leq \gamma_{ij}$. Hence, $v=\eta_{ij}^k$.
		
	\end{proof}
	Now, let us see some examples that illustrate Theorem \ref{thm2}.
	\begin{eg}\label{eg2}
		Consider a networked system with three nodes with the following dynamics:
		\begin{align*}
			&A_1=\begin{pmatrix}
				1&2\\0&1
			\end{pmatrix}, A_2=\begin{pmatrix}
				1&0&2\\1&3&2\\-1&-1&0
			\end{pmatrix}, 	A_3=\begin{pmatrix}
				1&0\\0&0
			\end{pmatrix}\\
			&B_1=\begin{pmatrix}
				0\\1
			\end{pmatrix}, B_2=\begin{pmatrix}
				0\\1\\1
			\end{pmatrix}, B_3=\begin{pmatrix}
				1\\1
			\end{pmatrix} \\
			&C_1=\begin{pmatrix}
				1&0\\1&0
			\end{pmatrix},
			C_2=\begin{pmatrix}
				1&0&1\\1&0&1
			\end{pmatrix},  C_3=\begin{pmatrix}
				0&0\\1&0
			\end{pmatrix}\\
			&H_1=\begin{pmatrix}
				1&0\\0&0
			\end{pmatrix},
			H_2=\begin{pmatrix}
				1&-1\\0&0\\0&0
			\end{pmatrix}
			H_3=\begin{pmatrix}
				1&-1\\0&0
			\end{pmatrix} 	
		\end{align*}
		Let   $L=\begin{pmatrix}
			0&1&0\\1&0&0\\0&1&0
		\end{pmatrix}\  \text{and}\ \Delta=\begin{pmatrix}
			1&0&0\\0&1&0\\0&0&1
		\end{pmatrix}$. 
		Then, the networked system can be written in the compact form \eqref{eq2} with
		$$\mathcal{A}=\begin{pmatrix}
			1&2&1&0&1&0&0\\0&1&0&0&0&0&0\\0&0&1&0&2&0&0\\0&0&1&3&2&0&0\\0&0&-1&-1&0&0&0\\0&0&0&0&0&1&0\\0&0&0&0&0&0&0
		\end{pmatrix}, \mathcal{B}=\begin{pmatrix}
			0&0&0\\1&0&0\\0&0&0\\0&1&0\\0&1&0\\0&0&1\\0&0&1
		\end{pmatrix}$$
		We can compute the eigenspectrum of $\mathcal{A}$ as $$\sigma(A)=\{1,1,1+\sqrt{2}i,1-\sqrt{2}i,2,0,1\}$$
		Table \ref{table1} lists the eigenvalues and left eigenvectors of the state matrices $A_1, A_2$ and $A_3$.\\
		\begin{table}[h!]
			\caption{Eigenvalues \& Left eigenvectors of state matrices $A_1,A_2,A_3$ in Example \ref{eg2}}
			\label{table1}
			\setlength{\tabcolsep}{2pt}
			\begin{tabular}{|p{25pt}|p{75pt}|p{115pt}|}
				\hline
				Matrix&Eigenvalues&Left eigenvectors\\
				\hline
				$A_1$&$\mu_1^1=1$&$\zeta_{11}^1=\begin{pmatrix}
					0\\1
				\end{pmatrix}^T$\\
				\hline
				$A_2$&$\mu_2^1=1+i\sqrt{2},$\par$\mu_2^2=1-i\sqrt{2},$\par$\mu_2^3=2$&$\zeta_{21}^1=\begin{pmatrix}-0.14- 0.39i\\-0.28- 0.19i\\-0.84\end{pmatrix}^T,$\par$\zeta_{22}^1=\begin{pmatrix}-0.14 + 0.39i\\-0.28+ 0.19i\\-0.84\end{pmatrix}^T,$\par$\zeta_{23}^1=\begin{pmatrix} 0\\-0.7071\\-0.7071\end{pmatrix}^T$\\
				\hline
				$A_3$&$\mu_3^1=0,\mu_3^2=1$&$\zeta_{31}^1=\begin{pmatrix}
					0\\1
				\end{pmatrix}^T,\zeta_{32}^1=\begin{pmatrix}
					1\\0
				\end{pmatrix}^T$\\
				\hline
			\end{tabular}
		\end{table}\\
		From Table \ref{table1}, we can see that the eigenspectrum of $\mathcal{A}$ is the union of the eigenspectra of $A_1, A_2$ and $A_3$. That is, $$\sigma(\mathcal{A})=\sigma(A_1)\cup\sigma(A_2)\cup\sigma(A_3)$$
		Now, let us compute the left eigenvectors of $\mathcal{A}$.\\
		$\bullet$ $\eta_{11}^1=\begin{pmatrix}
			0 &
			1 &
			0 &
			0 &
			0 &
			0 &
			0
		\end{pmatrix}^T$ is the left eigenvector of $\mathcal{A}$ corresponding to the eigenvalue $\mu_1^1=1$. \\
		$\bullet$ $\eta_{21}^1=(
		0\  
		0 \ 
		-(0.14+0.39i) \ 
		-(0.28+ 0.19i) \ 
		-0.84 \   
		0  \	0
		)^T$ is the left eigenvector of $\mathcal{A}$ corresponding to the eigenvalue $\mu_2^1=1+\sqrt{2}i$. \\
		$\bullet$ $\eta_{22}^1=(
		0 \ 
		0 \ 
		-0.14 + 0.39i\ 
		-0.28 + 0.19i\ 
		-0.84\ 
		0\ 
		0
		)^T$ is the left eigenvector of $\mathcal{A}$ corresponding to the eigenvalue $\mu_2^2=1-\sqrt{2}i$ \\
		$\bullet$  $\eta_{23}^1=\begin{pmatrix}
			0 &
			0 &
			0 &
			-0.7071 &
			-0.7071 &
			0&
			0
			
		\end{pmatrix}^T$ is the left eigenvector of $\mathcal{A}$ corresponding to the eigenvalue $\mu_2^3=2$.\\
		$\bullet$ $\eta_{31}^1=\begin{pmatrix}
			0  & 0 & 0 & 0 & 0 & 0 & 1
		\end{pmatrix}^T$ is the left eigenvector of $\mathcal{A}$ corresponding to the eigenvalue $\mu_3^1=0$ and
		$\eta_{32}^1=\begin{pmatrix}
			0 & 0 & 0 & 0 & 0 & 1 & 0
		\end{pmatrix}^T$ is the left eigenvector of $\mathcal{A}$ corresponding to the eigenvalue $\mu_3^2=1$

		We can see that the left eigenvectors of $\mathcal{A}$ are of the form $\eta_{ij}^k=[\underbrace{0}_{1\times n_1}\cdots\underbrace{0}_{ 1\times n_{i-1}} \ \underbrace{\zeta_{ij}^{k}}_{1\times n_i}\ \underbrace{0}_{1\times n_{i+1}}\cdots \underbrace{0}_{1\times n_N}], 1\leq k\leq \gamma_{ij}, 1\leq i \leq n, 1 \leq j\leq q_i$ and $\zeta_{ij}^{k}$ is the left eigenvector of $A_i$ corresponding to the eigenvalue $\mu_i^j$. Thus, Theorem \ref{thm2} is verified.
	\end{eg}
	
	The left eigenvectors of $\mathcal{A}$ being characterized in terms of the left eigenvectors of state matrices of the individual nodes, we obtain a verifiable necessary and sufficient condition for controllability of the networked system \eqref{eq2}-\eqref{eq2-1}.
	\begin{thm}\label{thm3}
		Consider the networked system \eqref{eq2}-\eqref{eq2-1}. Let $\sigma(A_{i}+\beta_{ii}H_{i}C_{i})=\{ \mu_{i}^{1}, \mu_{i}^{2},\cdots,\mu_{i}^{q_i}\}, 1\leq i\leq N$ and $\zeta_{ij}^k$ denote the left eigenvectors of $A_{i}+\beta_{ii}H_{i}C_{i}$ corresponding to the eigenvalue $\mu_i^j$ where $1\leq k\leq \gamma_{ij}$, $\gamma_{ij}$ denotes the geometric multiplicity of $\mu_i^j$ as an eigenvalue of $A_{i}+\beta_{ii}H_iC_i$. Suppose that whenever $\beta_{ij}\neq 0, H_{i}C_{j}=0, \forall i>j$ and $\zeta_{ij}^k$ is orthogonal to $H_{i}C_{j}, \forall i<j$, $k=1,\cdots,\gamma_{ij}$. Then the networked system is controllable if and only if:
		\begin{itemize}
			\item[(i)] $(A_i+\beta_{ii}H_iC_i,B_i)$ is controllable $\forall\, i, 1\leq i\leq N$.
			\item [(ii)] Each node is under external control, i.e., $\delta_i\neq 0, \forall i=1,\ldots,N$.
		\end{itemize} 
	\end{thm}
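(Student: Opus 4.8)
The plan is to apply the PBH eigenvector test (Theorem \ref{control}(ii)) to the pair $(\mathcal{A},\mathcal{B})$ and to exploit the complete description of the left eigenvectors of $\mathcal{A}$ already furnished by Theorem \ref{thm2}. Recall that $(\mathcal{A},\mathcal{B})$ fails to be controllable precisely when some nonzero left eigenvector $v$ of $\mathcal{A}$ satisfies $v\mathcal{B}=0$. Under the standing hypotheses, Theorem \ref{thm2} guarantees that every left eigenvector of $\mathcal{A}$ is a linear combination of vectors $\eta_{ij}^k=[\,0\cdots 0\ \zeta_{ij}^k\ 0\cdots 0\,]$ supported in a single block position $i$, where $\zeta_{ij}^k$ is a left eigenvector of $A_i+\beta_{ii}H_iC_i$. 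So the whole problem reduces to testing the condition $v\mathcal{B}=0$ against these structured eigenvectors.

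First I would compute the action of $\mathcal{B}$ on such eigenvectors. Since $\mathcal{B}=blockdiag\{\delta_1B_1,\ldots,\delta_NB_N\}$, a vector supported in block $i$ gives $\eta_{ij}^k\mathcal{B}=[\,0\cdots 0\ \delta_i\zeta_{ij}^kB_i\ 0\cdots 0\,]$, again supported only in block $i$. Because the block positions of the product $v\mathcal{B}$ are disjoint across distinct indices, the requirement $v\mathcal{B}=0$ for a general (possibly cross-block) eigenvector decouples: it holds if and only if $\delta_\ell v_\ell B_\ell=0$ for each nonzero block component $v_\ell$ of $v$, where each such $v_\ell$ is itself a left eigenvector of $A_\ell+\beta_{\ell\ell}H_\ell C_\ell$. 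Consequently, $(\mathcal{A},\mathcal{B})$ is uncontrollable if and only if there exist an index $\ell$ and a nonzero left eigenvector $w$ of $A_\ell+\beta_{\ell\ell}H_\ell C_\ell$ with $\delta_\ell w B_\ell=0$.

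From this equivalent formulation the two asserted conditions fall out directly. For the necessity of (ii), if $\delta_\ell=0$ for some $\ell$, then any left eigenvector $w$ of $A_\ell+\beta_{\ell\ell}H_\ell C_\ell$ (one exists over $\mathbb{C}$) yields $\delta_\ell wB_\ell=0$, so the system is uncontrollable; hence $\delta_\ell\neq 0$ for all $\ell$. Granting (ii), the scalar $\delta_\ell$ is nonzero, so $\delta_\ell wB_\ell=0$ is equivalent to $wB_\ell=0$, and the absence of any nonzero eigenvector $w$ of $A_\ell+\beta_{\ell\ell}H_\ell C_\ell$ with $wB_\ell=0$ is exactly the PBH controllability criterion for the pair $(A_\ell+\beta_{\ell\ell}H_\ell C_\ell,\,B_\ell)$, which is condition (i). Reading the chain of equivalences in both directions then establishes the biconditional.

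The computations here are elementary, since the real content has already been absorbed into Theorem \ref{thm2}. The one point demanding care is the reduction to a single block: one must verify that a cross-block left eigenvector with $v\mathcal{B}=0$ forces each of its nonzero block components $v_\ell$ to satisfy $\delta_\ell v_\ell B_\ell=0$ separately, which is what lets the global PBH test split into the $N$ nodewise PBH tests. This separation relies precisely on the block-diagonal structure of $\mathcal{B}$ together with the one-block support of the eigenvectors guaranteed by the triangularization and orthogonality hypotheses inherited from Theorem \ref{thm2}.
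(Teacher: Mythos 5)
Your proof is correct and follows essentially the same route as the paper: apply the PBH eigenvector test to $(\mathcal{A},\mathcal{B})$, invoke the characterization of the left eigenvectors of $\mathcal{A}$ from Theorem \ref{thm2}, and use the block-diagonal structure of $\mathcal{B}$ to reduce the test to the nodewise conditions $\delta_i\neq 0$ and $\zeta_{ij}^kB_i\neq 0$. Your additional care in decoupling possibly cross-block eigenvectors (arising when an eigenvalue is shared between distinct nodes) is a minor refinement that the paper glosses over, but the substance of the argument is identical.
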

	\begin{proof}
		First, assume $(\mathcal{A},\mathcal{B})$ is controllable, then by PBH eigenvector test (Theorem \ref{control}(ii)), $\eta_{ij}^k \mathcal{B}\neq 0$ for $1\leq i\leq N, 1\leq j\leq q_i, 1\leq k\leq \gamma_{ij}$
		which implies that $\delta_i\zeta_{ij}^kB_i\neq 0, \forall i=1,\cdots,N$. That is,
		\begin{itemize}
			\item $\delta_i\neq 0$, i.e., node $i$ has an external control input and
			\item $\zeta_{ij}^kB_i\neq 0, \forall i=1,\cdots,N$, i.e., there is no left eigenvector of $A_i+\beta_{ii}H_iC_i$ that is orthogonal to $B_i$
		\end{itemize}  
		\noindent Hence $(A_i+\beta_{ii}H_iC_i,B_i)$ is a controllable pair $\forall\, i=1,\cdots,N$ and each node is under external control.\\
		
		\noindent Conversely, assume that $(\mathcal{A},\mathcal{B})$ is an uncontrollable pair. Further by PBH eigenvector test, there exists $\eta_{ij}^k\neq 0$ such that $\eta_{ij}^k \mathcal{B}=0$ and 
		\begin{align*}
			\eta_{ij}^k \mathcal{B}=0
			\implies \delta_i(\zeta_{ij}^k)B_i=0
			\implies \delta_i=0 \ \text{or}\  (\zeta_{ij}^k)B_i=0
		\end{align*}
		That is, either $(A_i+\beta_{ii}H_iC_i,B_i)$ is an uncontrollable pair or node $i$ is not under external control.
	\end{proof}
	This theorem is more efficient as compared to Theorem \ref{thm1} as it does not involve solving matrix equations. Also, the dependence of network controllability on various factors as stated above is clearly brought into the conditions.
	\begin{eg}\label{eg4}
		Consider the networked system in Example \ref{eg2}. Here, $\beta_{ii}=0, i=1,2,3$ and $(A_1,B_1), (A_2,B_2)$ and $(A_3,B_3)$ are controllable pairs. Also note that $H_2C_1=H_3C_2=0$ and $\beta_{31}=0$. Further, here $\zeta_{11}^1=\begin{pmatrix}
			0&1
		\end{pmatrix}$, which is orthogonal to $H_1C_2$ and $\beta_{13}=\beta_{23}=\beta_{33}=0$. Hence all the conditions of Theorem \ref{thm3} are satisfied and the networked system $(\mathcal{A}, \mathcal{B})$ is controllable .
		\begin{figure}[h!]\label{fig2}
			\center
			
			\begin{tikzpicture}
				\Vertex[x=1,y=0,label=$v_1$]{a1}
				\Vertex[x=3,y=0,label=$v_2$]{a2}
				\Vertex[x=5,y=0,label=$v_3$]{a3}
				\Vertex[x=-0.5,y=1,color=red,label=$u_1$]{b1}
				\Vertex[x=4,y=1,color=red,label=$u_2$]{b2}
				\Vertex[x=6,y=1,color=red,label=$u_3$]{b3}
				\Edge[Direct](b1)(a1)
				\Edge[Direct](b2)(a2)
				\Edge[Direct](b3)(a3)
				\Edge[bend=20,Direct,label=$\beta_{21}$](a1)(a2)
				\Edge[bend=20,Direct,label=$\beta_{12}$](a2)(a1)
				\Edge[Direct,label=$\beta_{32}$](a2)(a3)
			\end{tikzpicture} 
			\caption{Controllable networked system with dynamics as in Example \ref{eg4}}
		\end{figure}
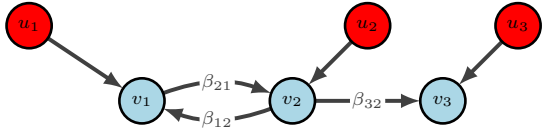
	\end{eg}
	\begin{eg}\label{eg5}
		Consider the same networked system as in Example \ref{eg2}, except that no external control input is applied to node 3, i.e., $\Delta=\begin{pmatrix}
			1&0&0\\0&1&0\\0&0&0
		\end{pmatrix}$. 
		\begin{figure}[h!]\label{fig5}
			\center
			
			\begin{tikzpicture}
				\Vertex[x=1,y=0,label=$v_1$]{a1}
				\Vertex[x=3,y=0,label=$v_2$]{a2}
				\Vertex[x=5,y=0,label=$v_3$]{a3}
				\Vertex[x=-0.5,y=1,color=red,label=$u_1$]{b1}
				\Vertex[x=4,y=1,color=red,label=$u_2$]{b2}
				\Edge[Direct](b1)(a1)
				\Edge[Direct](b2)(a2)
				\Edge[bend=20,Direct,label=$\beta_{21}$](a1)(a2)
				\Edge[bend=20,Direct,label=$\beta_{12}$](a2)(a1)
				\Edge[Direct,label=$\beta_{32}$](a2)(a3)
			\end{tikzpicture} 
			\caption{Uncontrollable networked system with dynamics as in Example \ref{eg5}}
		\end{figure}
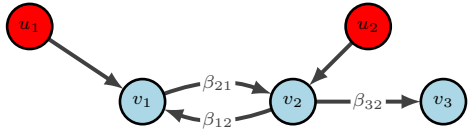\\
		Then, the networked system can be written in the compact form \eqref{eq2} with
		$$\mathcal{A}=\begin{pmatrix}
			1&2&1&0&1&0&0\\0&1&0&0&0&0&0\\0&0&1&0&2&0&0\\0&0&1&3&2&0&0\\0&0&-1&-1&0&0&0\\0&0&0&0&0&1&0\\0&0&0&0&0&0&0
		\end{pmatrix},\mathcal{B}=\begin{pmatrix}
			0&0&0\\1&0&0\\0&0&0\\0&1&0\\0&1&0\\0&0&0\\0&0&0
		\end{pmatrix}$$  Here, $(\mathcal{A},\mathcal{B})$ is an uncontrollable pair. This example illustrates the importance of condition (ii) in Theorem \ref{thm3}.
	\end{eg}
	\begin{eg}\label{eg6}
		Consider the 3-node networked system where
		\begin{align*}
			&A_1=\begin{pmatrix}
				1&2\\0&1
			\end{pmatrix}, A_2=\begin{pmatrix}
				1&0&0\\0&1&0\\0&0&2
			\end{pmatrix}, A_3=\begin{pmatrix}
				1&0\\0&0
			\end{pmatrix}\\
			&B_1=\begin{pmatrix}
				0\\1
			\end{pmatrix}, B_2=\begin{pmatrix}
				0\\0\\1
			\end{pmatrix}, B_3=\begin{pmatrix}
				1\\1
			\end{pmatrix}\\
			&C_1=\begin{pmatrix}
				1&0\\1&0
			\end{pmatrix}, C_2=\begin{pmatrix}
				1&0&1\\1&0&1
			\end{pmatrix}, C_3=\begin{pmatrix}
				0&0\\1&0
			\end{pmatrix}\\
			&H_1=\begin{pmatrix}
				1&0\\0&0
			\end{pmatrix}, H_2=\begin{pmatrix}
				1&-1\\0&0\\0&0
			\end{pmatrix}, H_3=\begin{pmatrix}
				1&-1\\0&0
			\end{pmatrix} 	
		\end{align*} 
		Choose $\beta_{ij}$ and $\delta_i$ such that $$L=\begin{pmatrix}
			0&1&0\\1&0&0\\0&1&0
		\end{pmatrix}\ \text{and}\ \Delta=\begin{pmatrix}
			1&0&0\\0&1&0\\0&0&1
		\end{pmatrix}$$
		Then, the networked system can be written in the compact form \eqref{eq2} with
		$$\mathcal{A}=\begin{pmatrix}
			1&2&1&0&1&0&0\\0&1&0&0&0&0&0\\0&0&1&0&0&0&0\\0&0&0&1&0&0&0\\0&0&0&0&2&0&0\\0&0&0&0&0&1&0\\0&0&0&0&0&0&0
		\end{pmatrix},\mathcal{B}=\begin{pmatrix}
			0&0&0\\1&0&0\\0&0&0\\0&0&0\\0&1&0\\0&0&1\\0&0&1
		\end{pmatrix}$$ Here, $\beta_{ii}=0, i=1,2,3$ and $(A_1,B_1)$ and $(A_3,B_3)$ are controllable pairs, wheras $(A_2,B_2)$ is an uncontrollable pair. Also, note that $H_2C_1=H_3C_2=0$ and $\beta_{31}=0$. Further, here $\zeta_{11}^1=\begin{pmatrix}
			0&1
		\end{pmatrix}$, which is orthogonal to $H_1C_2$ and $\beta_{13}=\beta_{23}=\beta_{33}=0$.  Networked system $(\mathcal{A},\mathcal{B})$ is not controllable. This example illustrates the importance of condition (i) in Theorem \ref{thm3}.
		\begin{figure}[h!]\label{fig3}
			\center
			
			\begin{tikzpicture}
				\Vertex[x=1,y=0,label=$v_1$]{a1}
				\Vertex[x=3,y=0,label=$v_2$]{a2}
				\Vertex[x=5,y=0,label=$v_3$]{a3}
				\Vertex[x=-0.5,y=1,color=red,label=$u_1$]{b1}
				\Vertex[x=4,y=1,color=red,label=$u_2$]{b2}
				\Vertex[x=6,y=1,color=red,label=$u_3$]{b3}
				\Edge[Direct](b1)(a1)
				\Edge[Direct](b2)(a2)
				\Edge[Direct](b3)(a3)
				\Edge[bend=20,Direct,label=$\beta_{21}$](a1)(a2)
				\Edge[bend=20,Direct,label=$\beta_{12}$](a2)(a1)
				\Edge[Direct,label=$\beta_{32}$](a2)(a3)
			\end{tikzpicture} 
			\caption{Uncontrollable networked system with dynamics as in Example \ref{eg6}}
		\end{figure}
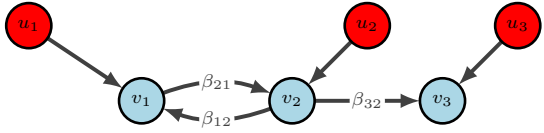
	\end{eg}
	
	\noindent Consider the networked system \eqref{eq2}-\eqref{eq2-1} where $n_i=n$ and $C_i=I_n,\forall\, i$.
	The dynamics of the $i-$th node is given by 
	\begin{equation}\label{ns3}
		\dot{x_i}=A_ix_i+\sum_{j=1}^{N}\beta_{ij}H_ix_j+\delta_iB_iu_i; i=1,\cdots,N
	\end{equation}
	and the networked system, under topology $L$ is given by the following compact form
	\begin{align}
		\dot{X}&=\mathcal{A} X+\mathcal{B} U
	\end{align}
	where,
	\begin{align*}
		\mathcal{A}&=A+\Gamma \quad\left(A=diag\{A_1,A_2,...,A_N\}, \Gamma=(\beta_{ij}H_i)_{i,j=1}^{N}\right)\\
		\mathcal{B}&=diag\{\delta_1B_1,\delta_2B_2,...,\delta_NB_N\}
	\end{align*}
	Theorems (\ref{thm2}) and (\ref{thm3}) apply to this system under triangular network topology as considered in Ajayakumar \& George\cite{A.Ajayakumar-3}.
	\begin{cor}[Ajayakumar \& George , Theorem 4, \cite{A.Ajayakumar-3}]\label{cor1}
		Assume that $L$ is an upper/ lower triangular matrix. Let $\sigma(A_i+\beta_{ii}H_i)=\{\mu_{i}^1,\mu_{i}^2,\cdots,\mu_{i}^{q_i}\}$ be the set of eigenvalues of $A_i+\beta_{ii}H_i$. Then, the set of all eigenvalues of $\mathcal{A}$ is given by $$\sigma(\mathcal{A})=\{\mu_{1}^1,\mu_{1}^2,\cdots,\mu_{1}^{q_1},\cdots,\mu_{N}^1,\mu_{N}^2,\cdots,\mu_{N}^{q_N}\}$$ Let $\zeta_{ij}^k,k=1,2,\cdots,\gamma_{ij}$ be the left eigenvectors of $A_i+\beta_{ii}H_i$ associated with the eigenvalue $\mu_{i}^j$, where $\gamma_{ij}$ is the geometric multiplicity of $\mu_{i}^j$. If $\zeta_{ij}^kH_i=0$, for $i=1,2,\cdots,N-1, j=1,2,\cdots,q_i, k=1,2,\cdots, \gamma_{ij}$, then $e_i\otimes \zeta_{ij}^1,e_i\otimes \zeta_{ij}^2,\cdots, e_i\otimes \zeta_{ij}^{\gamma_{ij}} $ are the left eigenvectors of $\mathcal{A}$ associated with the eigenvalues $\mu_{i}^j$.
	\end{cor}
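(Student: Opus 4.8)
The plan is to obtain Corollary \ref{cor1} as the direct specialization of Theorem \ref{thm2} to the case $n_i=n$ and $C_i=I_n$ for all $i$. Under these identifications every off-diagonal coupling block $\beta_{ij}H_iC_j$ collapses to $\beta_{ij}H_i$ and every diagonal block $A_i+\beta_{ii}H_iC_i$ becomes $A_i+\beta_{ii}H_i$, so the matrix $\mathcal{A}=A+\Gamma$ of the reduced system \eqref{ns3} has exactly the block structure studied in Theorem \ref{thm2}. I would first verify the two structural hypotheses of that theorem. When $L$ is upper triangular we have $\beta_{ij}=0$ for all $i>j$, so the block-triangularity hypothesis ``$\beta_{ij}\neq 0\Rightarrow H_iC_j=0$ for $i>j$'' holds vacuously and $\mathcal{A}$ is block upper triangular with diagonal blocks $A_i+\beta_{ii}H_i$. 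The orthogonality hypothesis ``$\zeta_{ij}^k$ orthogonal to $H_iC_j$ for $i<j$'' reduces, since $C_j=I_n$, to $\zeta_{ij}^kH_i=0$; as a block index $j>i$ exists only when $i\leq N-1$, this is precisely the hypothesis $\zeta_{ij}^kH_i=0,\ i=1,\dots,N-1$ assumed in the corollary.

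With both hypotheses in force, Theorem \ref{thm2} gives at once that $\sigma(\mathcal{A})=\bigcup_{i=1}^{N}\sigma(A_i+\beta_{ii}H_i)=\{\mu_1^1,\dots,\mu_1^{q_1},\dots,\mu_N^1,\dots,\mu_N^{q_N}\}$ and that the left eigenvectors of $\mathcal{A}$ associated with $\mu_i^j$ are precisely the vectors $\eta_{ij}^k$. The remaining step is purely notational: since all node dimensions coincide, the block row vector $\eta_{ij}^k=[\,0\ \cdots\ 0\ \zeta_{ij}^k\ 0\ \cdots\ 0\,]$, with $\zeta_{ij}^k$ occupying the $i$-th block of length $n$, is exactly $e_i\otimes\zeta_{ij}^k$ for the $i$-th standard basis row vector $e_i\in\mathbb{R}^{1\times N}$. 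This yields the eigenvector description stated in the corollary.

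The one point that needs care---and the step I would treat as the main obstacle---is the orientation of the triangularity, because Theorem \ref{thm2} is proved for a block \emph{upper} triangular $\mathcal{A}$, whereas the corollary also allows $L$ lower triangular. For the lower triangular case I would apply the permutation similarity induced by the reversal $i\mapsto N+1-i$: this conjugates the block lower triangular $\mathcal{A}$ into a block upper triangular matrix, leaves $\sigma(\mathcal{A})$ unchanged, and carries each candidate eigenvector $e_i\otimes\zeta_{ij}^k$ to $e_{N+1-i}\otimes\zeta_{ij}^k$, so that Theorem \ref{thm2} applies verbatim after relabeling. Equivalently, one can rerun the proof of Theorem \ref{thm2} starting from the last block equation and proceeding upward, in which case the orthogonality condition attaches to the indices $i=2,\dots,N$ instead; either way the argument is mechanical once the correct index range is identified, and no estimate beyond the block-triangular eigenvalue/eigenvector bookkeeping is required.
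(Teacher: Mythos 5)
Your proof is correct and takes essentially the same approach as the paper: both obtain the corollary by specializing Theorem \ref{thm2} to the case $n_i=n$, $C_i=I_n$ and then identifying the block vector $\eta_{ij}^k$ with $e_i\otimes\zeta_{ij}^k$. You are in fact somewhat more careful than the paper's own two-line argument, which neither verifies explicitly that the hypotheses of Theorem \ref{thm2} reduce to those of the corollary nor addresses the lower-triangular case that you handle by permutation similarity.
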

	\begin{proof}
		From the proof of Theorem \ref{thm2}, $$\eta_{ij}^k=[\underbrace{0}_{1\times n_1}\cdots\underbrace{0}_{ 1\times n_{i-1}} \ \underbrace{\zeta_{ij}^{k}}_{1\times n_i}\ \underbrace{0}_{1\times n_{i+1}}\cdots \underbrace{0}_{1\times n_N}]$$ where $1\leq k\leq \gamma_{ij}, 1\leq i \leq n, 1 \leq j\leq q_i$ and $\zeta_{ij}^{k}$ is the left eigenvector of $A_i$ corresponding to the eigenvalue $\mu_i^j$. Since $n_i=n, \forall i$, 
		$$\eta_{ij}^k=[\underbrace{0}_{1\times n}\cdots\underbrace{0}_{ 1\times n} \ \underbrace{\zeta_{ij}^{k}}_{1\times n}\ \underbrace{0}_{1\times n}\cdots \underbrace{0}_{1\times n}]=e_i\otimes \zeta_{ij}^k$$ Hence the corollary follows from Theorem \ref{thm2}.
	\end{proof}
	The next corollary also follows from the fact that $n_i=n, \forall i=1,2,\ldots,n$.
	\begin{cor}[Ajayakumar \& George , Theorem 5, \cite{A.Ajayakumar-3}]\label{cor2}
		Let $L$ be an upper/lower triangular matrix. Suppose the left eigenvectors of $A_i+\beta_{ii}H_i$ satisfy the conditions given in Corollary (\ref{cor1}), then the networked system \ref{ns3} is controllable if and only if:
		\begin{itemize}
			\item [(i)]$(A_i+\beta_{ii}H_i, B_i)$ is a controllable pair for all $i=1,2,\cdots, N$
			\item [(ii)] Every node have external control input, i.e., $\delta_i\neq 0,\forall i$
		\end{itemize}
	\end{cor}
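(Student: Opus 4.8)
The plan is to obtain this corollary as the specialization of Theorem \ref{thm3} to the case $n_i = n$ and $C_i = I_n$ for every $i$, in the same spirit that Corollary \ref{cor1} specialized Theorem \ref{thm2}. First I would record that under these substitutions the diagonal blocks simplify: $A_i + \beta_{ii} H_i C_i = A_i + \beta_{ii} H_i$. Consequently the eigenvalues $\mu_i^j$ and the left eigenvectors $\zeta_{ij}^k$ that Theorem \ref{thm3} attaches to $A_i + \beta_{ii} H_i C_i$ coincide exactly with those attached to $A_i + \beta_{ii} H_i$ in the hypothesis of the present statement.

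The core step is to verify that the two structural hypotheses of Theorem \ref{thm3} hold here. Taking $L$ upper triangular (the lower-triangular case being symmetric), we have $\beta_{ij} = 0$ for all $i > j$, so the requirement that $H_i C_j = 0$ whenever $\beta_{ij} \neq 0$ with $i > j$ is satisfied vacuously. For indices $i < j$ the factor $C_j$ equals $I_n$, hence $H_i C_j = H_i$, and the orthogonality requirement that $\zeta_{ij}^k$ be orthogonal to $H_i C_j$ reduces precisely to $\zeta_{ij}^k H_i = 0$ --- which is exactly the condition imported from Corollary \ref{cor1}. With both hypotheses in force, Theorem \ref{thm3} applies.

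Finally I would invoke Theorem \ref{thm3}: the system \eqref{ns3} is controllable if and only if $(A_i + \beta_{ii} H_i C_i, B_i)$ is a controllable pair for each $i$ and $\delta_i \neq 0$ for each $i$. Rewriting $A_i + \beta_{ii} H_i C_i$ as $A_i + \beta_{ii} H_i$ yields conditions (i) and (ii) as stated. I do not anticipate a substantive obstacle, since the result is a direct specialization; the only point requiring care is the index bookkeeping --- checking that the vanishing condition of Corollary \ref{cor1}, stated for $i = 1, \ldots, N-1$, matches the super-diagonal block pattern of the upper-triangular $\mathcal{A}$ (whose last block row carries no off-diagonal terms), and re-indexing symmetrically for the lower-triangular case.
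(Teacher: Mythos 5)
Your proposal is correct and follows essentially the same route as the paper, which simply notes that the corollary follows from Theorem \ref{thm3} once $n_i=n$ and $C_i=I_n$ are substituted (so that $A_i+\beta_{ii}H_iC_i=A_i+\beta_{ii}H_i$ and the orthogonality hypothesis reduces to $\zeta_{ij}^kH_i=0$). Your write-up is in fact more explicit than the paper's one-line justification, and the verification of the two structural hypotheses of Theorem \ref{thm3} is sound.
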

	
	\subsection{A Necessary and Sufficient Condition for Observability of the Networked System}
	
	We now present a necessary and sufficient condition for the observability of the networked system (\ref{sys1})-(\ref{sys2}).
	\begin{thm}\label{thm4}
		The networked system (\ref{sys1})-(\ref{sys2}) is observable if and only if all $(C_i,A_i)$s are observable pairs.
	\end{thm}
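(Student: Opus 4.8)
The plan is to apply the PBH observability eigenvector test (Theorem \ref{obsv}(ii)) to the aggregated pair $(\mathcal{C},\mathcal{A})$ and to exploit the fact that both $\mathcal{C}$ and the coupling term $\Gamma$ are assembled entirely from the output matrices $C_j$. Writing a candidate eigenvector in block form $v=[v_1^{T}\,\cdots\,v_N^{T}]^{T}$ with $v_i\in\mathbb{C}^{n_i}$, the test requires that for every $\lambda\in\mathbb{C}$ the simultaneous conditions $\mathcal{A}v=\lambda v$ and $\mathcal{C}v=0$ force $v=0$.

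First I would unpack these two conditions blockwise. Since $\mathcal{C}=\text{blockdiag}\{C_1,\ldots,C_N\}$, the condition $\mathcal{C}v=0$ is exactly $C_iv_i=0$ for every $i$. The crucial structural observation is that every entry of $\Gamma$ has the form $\beta_{ij}H_iC_j$, so the $i$-th block of $\mathcal{A}v$ equals $A_iv_i+\sum_{j=1}^{N}\beta_{ij}H_i(C_jv_j)$. Once $C_jv_j=0$ holds for all $j$, the entire coupling sum (including the diagonal term $\beta_{ii}H_iC_iv_i$) vanishes, and the eigenvalue equation collapses to the decoupled system $A_iv_i=\lambda v_i$ for each $i$. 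Thus, under the constraint $\mathcal{C}v=0$, the PBH condition for $(\mathcal{C},\mathcal{A})$ reduces to the $N$ independent PBH conditions $A_iv_i=\lambda v_i$, $C_iv_i=0$.

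The two implications then follow directly. For sufficiency, assuming each $(C_i,A_i)$ is observable, any $v$ satisfying both conditions yields $A_iv_i=\lambda v_i$ together with $C_iv_i=0$, forcing $v_i=0$ for every $i$ by the per-node PBH test, hence $v=0$ and $(\mathcal{C},\mathcal{A})$ is observable. For necessity I would argue by contraposition: if some $(C_k,A_k)$ is not observable, I choose $\lambda$ and $v_k\neq0$ with $A_kv_k=\lambda v_k$ and $C_kv_k=0$, and embed it as $v=[0\,\cdots\,0\,v_k\,0\,\cdots\,0]^{T}$. Because $C_kv_k=0$ annihilates the only possibly nonzero coupling contribution, one verifies $\mathcal{A}v=\lambda v$ and $\mathcal{C}v=0$ with $v\neq0$, so $(\mathcal{C},\mathcal{A})$ fails the PBH test.

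There is no serious technical obstacle here; the whole argument rests on the single observation that the inter-node coupling $\Gamma=(\beta_{ij}H_iC_j)$ factors through the output maps $C_j$, so imposing $\mathcal{C}v=0$ simultaneously silences all coupling and renders the observability analysis node-by-node independent of the topology $L$, the presence parameters $\delta_i$, and the inner-coupling matrices $H_i$. The only care required is bookkeeping with the differing block dimensions $n_i$, which does not affect the logic.
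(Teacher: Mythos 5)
Your proposal is correct and follows essentially the same route as the paper: both directions are handled via the PBH observability test, using the key observation that the coupling blocks $\beta_{ij}H_iC_j$ are annihilated once $\mathcal{C}v=0$ forces $C_jv_j=0$, so the eigenvalue equation decouples node-by-node, and the necessity direction embeds a single-node unobservable eigenvector into a block vector exactly as the paper does.
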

	\begin{proof}
		Suppose that $(\mathcal{C},\mathcal{A})$ is an observable pair. 
		Let $(C_i,A_i)$ be an unobservable pair for some $i$.
		Then, by the PBH eigenvector test, there exists $v(\neq 0)\in \mathbb{R}^{n_i}$ such that 
		\begin{align*}
			A_iv=\lambda v\ \text{and} \ 
			C_iv=0
		\end{align*} 
		where, $\lambda$ is a scalar.
		Now, consider a 
		$\mathcal{N} \times 1$ vector $$V=[\underbrace{0}_{1\times n_1}\cdots\underbrace{0}_{1\times n_{i-1}}\ \underbrace{v}_{1\times n_i}\ \underbrace{0}_{1\times n_{i+1}}\cdots\underbrace{0}_{1\times n_N}]^{T}$$ 
		with $v$ in the $i-$th position. It can be seen that
		
		$$\ssmall\begin{pmatrix}
			A_1+\beta_{11}H_1C_1&\cdots&\cdots&\cdots&\beta_{1N}H_1C_N\\
			\vdots&\ddots&\cdots&\vdots&\vdots\\
			\beta_{i1}H_2C_1&\cdots&A_i+\beta_{ii}H_iC_i&\cdots&\beta_{iN}H_iC_N\\
			\vdots&\vdots&\vdots&\ddots&\vdots\\
			\beta_{N1}H_NC_1&\ldots&\ldots&\ldots&A_N+\beta_{NN}H_NC_N\\
		\end{pmatrix}\begin{pmatrix}
			0\\\vdots\\v\\\vdots\\0
		\end{pmatrix}$$	\begin{equation*}=\begin{pmatrix}
				0\\\vdots\\\lambda v\\\vdots\\0
			\end{pmatrix}=\lambda V
		\end{equation*}
		
		and
		
		\begin{align*}
			CV&=\begin{pmatrix}
				C_1&0&\cdots&0\\
				0&C_2&\cdots&0\\
				\vdots&\ddots&\vdots\\
				0&\cdots&\cdots&C_N
			\end{pmatrix}\begin{pmatrix}
				0\\\vdots\\v\\\vdots\\0
			\end{pmatrix}=\begin{pmatrix}
				0\\\vdots\\C_iv\\\vdots\\0
			\end{pmatrix}=0
		\end{align*}
		
		Hence by PBH condition, $(\mathcal{C},\mathcal{A})$ is an unobservable pair, as $V\neq 0$.\\
		
		\noindent Conversely suppose that $(C_i,A_i)$ is an observable pair for all $i$, $1\leq i \leq N$.
		Let $(\mathcal{C},\mathcal{A})$ be an unobservable pair.
		Then, by the PBH eigenvector test, there exists an $\mathcal{N}\times 1 $ vector $V\neq 0$ such that 
		\begin{align*}
			\mathcal{A} V=\lambda V\ \text{and}\ \mathcal{C}V=0
		\end{align*} 
		where, $\lambda$ is a scalar.
		Let $V=\begin{bmatrix}
			v_1&v_2&...&v_N
		\end{bmatrix}^T$
		where $v_i\in \mathbb{R}^{n_i},1\leq i \leq N$.
		That is, 
		$$\footnotesize\begin{pmatrix}
			A_1+\beta_{11}H_1C_1&\beta_{12}H_1C_2&\cdots&\beta_{1N}H_1C_N\\
			\beta_{21}H_2C_1&A_2+\beta_{22}H_2C_2&\cdots&\beta_{2N}H_2C_N\\
			\vdots&\vdots&\ddots&\vdots\\
			\beta_{N1}H_NC_1&\beta_{N2}H_NC_2&\cdots&A_N+\beta_{NN}H_NC_N\\
		\end{pmatrix}\begin{pmatrix}
			v_1\\\vdots\\v_k\\\vdots\\v_N
		\end{pmatrix}$$ 
		$$=\lambda\begin{pmatrix}
			v_1\\\vdots\\v_k\\\vdots\\v_N
		\end{pmatrix}$$
		$$\implies \left(A_i+\sum_{j=1}^{N}\beta_{ij}H_iC_j\right)v_i=\lambda v_i; i=1,\cdots, N $$ and
		\begin{align*}
			&\begin{pmatrix}
				C_1&0&\cdots&0\\
				0&C_2&\cdots&0\\
				\vdots&\ddots&\vdots\\
				0&\cdots&\cdots&C_N
			\end{pmatrix}\begin{pmatrix}
				v_1\\\vdots\\v_i\\\vdots\\v_N
			\end{pmatrix}=0\\
			&\implies C_1v_1=C_2v_2=\cdots=C_Nv_N=0
		\end{align*} 
		That is, $A_1v_1=\lambda v_1, A_2v_2=\lambda v_2,\cdots, A_Nv_N=\lambda v_N$. Since $V\neq 0$,  we have $v_i\neq 0$ for some $i$, $1\leq i\leq N$.
		Also we have $C_iv_i=0$. Now, $\mathcal{A} V=\lambda V\implies A_iv_i=\lambda v_i$. This together with $C_iv_i=0$
		implies that $(C_i,A_i)$ is an unobservable pair.
	\end{proof}

	\begin{rem}\label{rem1}
		Theorem \ref{thm4} implies that the observability of the networked system depends only on the observability of the node systems. The network topology $(L,\Delta)$ has no influence in determining the system's observability, i.e., there is no possible way of aligning unobservable nodes, resulting in an observable networked system. By definition, observability refers to the ability of the system to reconstruct the initial state from the knowledge of the output for some finite time interval. If the initial state of the networked system is $X^0=\begin{bmatrix}
			x_1^0,&\cdots &,x_N^0
		\end{bmatrix}$, where $x_i^0$ denotes the initial state of the $i-$th individual node system, $x_i^0$ can be extracted from $X^0$ as follows
		$x_i^0=\begin{bmatrix}
			0&\cdots &I_{n_i}&\cdots 0
		\end{bmatrix}X^0$.
		Similarly, the initial state of the networked system can be reconstructed from that of the individual nodes.
	\end{rem}
	
	\begin{eg}{\label{eg7}}
		To illustrate Theorem \ref{thm4}, we consider a networked system with four nodes. Individual node dynamics are as follows
		\begin{align*}
			&A_1=\begin{pmatrix}
				1&2\\0&1
			\end{pmatrix}, A_2=\begin{pmatrix}
				1&0&1\\1&3&2\\1&0&0
			\end{pmatrix},\\
			&A_3=\begin{pmatrix}
				1&1&2\\1&-1&0\\1&0&1
			\end{pmatrix},
			A_4=\begin{pmatrix}
				1&2\\1&0
			\end{pmatrix}\\
			&B_1=\begin{pmatrix}
				1\\0
			\end{pmatrix}, B_2=\begin{pmatrix}
				1\\2\\-1
			\end{pmatrix}, 
			B_3=\begin{pmatrix}
				1\\0\\1
			\end{pmatrix}, B_4=\begin{pmatrix}
				1\\0
			\end{pmatrix}\\
			&C_1=\begin{pmatrix}
				1&0
			\end{pmatrix},
			C_2=\begin{pmatrix}
				0&0&1
			\end{pmatrix}, C_3=\begin{pmatrix}
				1&0&0
			\end{pmatrix},
			C_4=\begin{pmatrix}
				1&0
			\end{pmatrix}\\
			&H_1=\begin{pmatrix}
				0\\-1
			\end{pmatrix}, H_2=\begin{pmatrix}
				1\\1\\1
			\end{pmatrix},
			H_3=\begin{pmatrix}
				0\\0\\1
			\end{pmatrix}, H_4=\begin{pmatrix}
				1\\1
			\end{pmatrix}
		\end{align*}
		Note that here, nodes $1,3$ and $4$ are observable, wheras node $2$ is unobservable.

		\textbf{(a)} Under the network topology $L=\begin{pmatrix}
			0&0&0&0\\0&0&0&1\\1&0&0&0\\0&0&1&0
		\end{pmatrix}$ and external control inputs applied to nodes $1$ and $4$, the networked system can be written in the compact form \eqref{eq2} with
		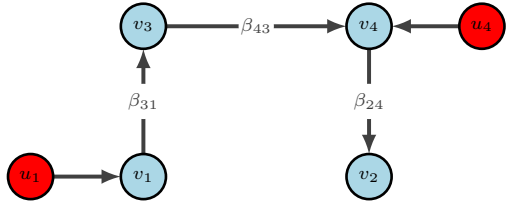
\begin{figure}[h!]\label{fig7a}
			\center
			
			\begin{tikzpicture}
				\Vertex[x=1,y=0,label=$v_1$]{a1}
				\Vertex[x=4,y=2,label=$v_4$]{a4}
				\Vertex[x=1,y=2,label=$v_3$]{a3}
				\Vertex[x=4,y=0,label=$v_2$]{a2}
				\Edge[Direct,label=$\beta_{31}$](a1)(a3)
				\Edge[Direct,label=$\beta_{43}$](a3)(a4)
				\Edge[Direct,label=$\beta_{24}$](a4)(a2)
				\Vertex[x=-0.5,y=0,color=red,label=$u_1$]{b1}
				\Vertex[x=5.5,y=2,color=red,label=$u_4$]{b4}
				\Edge[Direct](b1)(a1)
				\Edge[Direct](b4)(a4)
			\end{tikzpicture} 
			\caption{Unobservable networked system with dynamics as in Example \ref{eg7} (a)}
		\end{figure}
		$$\mathcal{A}=\begin{pmatrix}
			1&2&0&0&0&0&0&0&0&0\\0&1&0&0&0&0&0&0&0&0\\0&0&1&0&1&0&0&0&1&0\\0&0&1&3&2&0&0&0&1&0\\0&0&1&0&0&0&0&0&1&0\\0&0&0&0&0&1&1&2&0&0\\0&0&0&0&0&1&-1&0&0&0\\1&0&0&0&0&1&0&1&0&0\\0&0&0&0&0&1&0&0&1&2\\0&0&0&0&0&1&0&0&1&0
		\end{pmatrix}$$
		$$\mathcal{C}=\begin{pmatrix}
			1&0&0&0&0&0&0&0&0&0\\0&0&0&0&1&0&0&0&0&0\\0&0&0&0&0&1&0&0&0&0\\0&0&0&0&0&0&0&0&1&0
		\end{pmatrix}$$
		By Kalman's rank condition (Theorem \ref{obsv} (i)), we can see that $(C,\mathcal{A})$ is unobservable.
		
		\textbf{(b)} Connecting the nodes in a path network topology, i.e., $L=\begin{pmatrix}
			0&0&0&0\\1&0&0&0\\0&1&0&0\\0&0&1&0
		\end{pmatrix}$
		and external control inputs applied to nodes $1$ and $2$, the networked system can be written in the compact form \eqref{eq2} with
		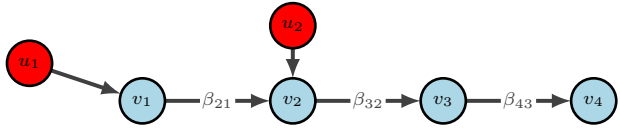
\begin{figure}[h!]\label{fig7b}
			\center
			
			\begin{tikzpicture}
				\Vertex[x=1,y=0,label=$v_1$]{a1}
				\Vertex[x=3,y=0,label=$v_2$]{a2}
				\Vertex[x=5,y=0,label=$v_3$]{a3}
				\Vertex[x=7,y=0,label=$v_4$]{a4}
				\Edge[Direct,label=$\beta_{21}$](a1)(a2)
				\Edge[Direct,label=$\beta_{32}$](a2)(a3)
				\Edge[Direct,label=$\beta_{43}$](a3)(a4)
				\Vertex[x=-0.5,y=0.5,label=$u_1$,color=red]{b1}
				\Vertex[x=3,y=1,label=$u_2$,color=red]{b2}
				\Edge[Direct](b1)(a1)
				\Edge[Direct](b2)(a2)
			\end{tikzpicture} 
			\caption{Unobservable networked system with dynamics as in Example \ref{eg7}(b)}
		\end{figure}
		$$\mathcal{A}=\begin{pmatrix}
			1&2&0&0&0&0&0&0&0&0\\0&1&0&0&0&0&0&0&0&0\\1&0&1&0&1&0&0&0&0&0\\1&0&1&3&2&0&0&0&0&0\\1&0&1&0&0&0&0&0&0&0\\0&0&0&0&0&1&1&2&0&0\\0&0&0&0&0&1&-1&0&0&0\\0&0&0&0&1&1&0&1&0&0\\0&0&0&0&0&1&0&0&1&2\\0&0&0&0&0&1&0&0&1&0
		\end{pmatrix}$$
		and $\mathcal{C}$ is as given above.
		Here too, $(\mathcal{C},\mathcal{A})$ is unobservable.
		This example asserts the fact that network topology has no influence on observability of the network.
	\end{eg}
	
	If the networked system  is homogeneous, then Theorem \ref{thm4} reduces to the result obtained in \cite{A.Thomas} as we see in the following corollary.
	\begin{cor}[Thomas \textit{et al.}, Theorem 4.1, \cite{A.Thomas}]
		The homogeneous networked system \eqref{wang}-\eqref{wang2} is observable if and only if $(C,A)$ is observable.
	\end{cor}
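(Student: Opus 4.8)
The plan is to obtain this corollary as an immediate specialization of Theorem \ref{thm4}. First I would observe that the homogeneous networked system \eqref{wang}--\eqref{wang2} is precisely the general networked system \eqref{sys1}--\eqref{sys2} under the identifications $n_i=n$, $A_i=A$, $B_i=B$, $C_i=C$, and $H_i=H$ for every $i=1,\ldots,N$. In particular, all the individual node dynamics share the same state matrix $A$ and the same output matrix $C$, so the hypotheses of Theorem \ref{thm4} are satisfied by construction.

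Next I would invoke Theorem \ref{thm4}, which asserts that the networked system is observable if and only if each pair $(C_i,A_i)$ is observable. Since $C_i=C$ and $A_i=A$ for every $i$, the family of conditions ``$(C_i,A_i)$ is observable for all $i$'' collapses into the single condition ``$(C,A)$ is observable''. This delivers the claimed equivalence directly.

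There is no substantive obstacle here: the statement is a genuine corollary, and the only thing to check is that the homogeneous model truly fits the framework of Theorem \ref{thm4}, which it does. All the mathematical content resides in Theorem \ref{thm4}, whose proof already treats the general heterogeneous case by exploiting the block-diagonal structure of $\mathcal{C}$ together with the PBH observability test; the homogeneous reduction requires no further argument beyond setting the node matrices equal.
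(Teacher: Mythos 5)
Your proposal is correct and matches the paper's approach exactly: the paper presents this corollary as an immediate specialization of Theorem \ref{thm4}, obtained by setting $A_i=A$ and $C_i=C$ for all $i$ so that the condition ``all $(C_i,A_i)$ are observable pairs'' collapses to ``$(C,A)$ is observable.'' No further argument is needed, and you correctly identified that all the substance lies in Theorem \ref{thm4}.
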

	
	\subsection{Some Necessary conditions for Controllability of the Networked System}
	In this section, we establish some necessary conditions for controllability of the networked system. These results show the relation between controllability and factors like network topology, external control inputs and observability of individual nodes. 
	
	\begin{thm}\label{thm5}\cite{A.Thomas}
		If there exists a node in the network with no in-coming edge, then for the controllability of the networked system \eqref{eq2}-\eqref{eq2-1}, it is necessary that the aforementioned node is controllable under an external control.
	\end{thm}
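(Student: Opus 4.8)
The plan is to establish the necessary condition in its contrapositive form through the PBH eigenvector test (Theorem~\ref{control}(ii)). That is, assuming the distinguished source node fails to be controllable under external control, I will manufacture a nonzero left eigenvector of $\mathcal{A}$ that is annihilated by $\mathcal{B}$, which by Theorem~\ref{control}(ii) certifies that $(\mathcal{A},\mathcal{B})$ is uncontrollable. The whole argument is a \emph{local} version of the eigenvector construction already carried out in the proof of Theorem~\ref{thm2}.

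First I would convert the hypothesis into a structural statement about $\mathcal{A}$. Suppose node $i$ has no in-coming edge, i.e. $\beta_{ij}=0$ for every $j\neq i$. Reading off the block form of $\mathcal{A}$ displayed in the proof of Theorem~\ref{thm2}, every off-diagonal entry $\beta_{ij}H_iC_j$ of the $i$-th block row then vanishes, so that row collapses to $[\,0\ \cdots\ 0\ \ \widehat{A}_i\ \ 0\ \cdots\ 0\,]$, where $\widehat{A}_i:=A_i+\beta_{ii}H_iC_i$ (which is simply $A_i$ when the node also carries no self-loop). This $\widehat{A}_i$ is precisely the state matrix of node $i$ viewed as a stand-alone system with input matrix $\delta_iB_i$.

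Next I would unwind the phrase ``node $i$ is not controllable under external control'' to mean that the pair $(\widehat{A}_i,\delta_iB_i)$ is uncontrollable, which happens either because $\delta_i=0$ or because $(\widehat{A}_i,B_i)$ is itself uncontrollable. Applying the PBH test to node $i$, in either case there exist an eigenvalue $\lambda$ and a nonzero left eigenvector $\zeta$ of $\widehat{A}_i$ with $\zeta\widehat{A}_i=\lambda\zeta$ and $\delta_i\,\zeta B_i=0$. I then lift $\zeta$ by setting $\eta=[\,0\ \cdots\ 0\ \ \zeta\ \ 0\ \cdots\ 0\,]$ with $\zeta$ in the $i$-th block slot. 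Because the $i$-th block row of $\mathcal{A}$ contains only the diagonal block $\widehat{A}_i$, the product $\eta\mathcal{A}$ has $\zeta\widehat{A}_i=\lambda\zeta$ in slot $i$ and zeros elsewhere, giving $\eta\mathcal{A}=\lambda\eta$; and since $\mathcal{B}=blockdiag\{\delta_1B_1,\ldots,\delta_NB_N\}$, the product $\eta\mathcal{B}$ reduces to the single block $\delta_i\,\zeta B_i=0$. Thus $\eta\neq 0$ is a left eigenvector of $\mathcal{A}$ with $\eta\mathcal{B}=0$, so $(\mathcal{A},\mathcal{B})$ is uncontrollable, which is exactly the contrapositive.

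The step demanding the most care is the first: recognizing that ``no in-coming edge'' is exactly the single-row condition that makes $\eta$, supported on block $i$, a genuine eigenvector of $\mathcal{A}$. Note that, unlike Theorems~\ref{thm2}--\ref{thm3}, no global triangularizability or orthogonality hypotheses are invoked, since I exploit only the cleanliness of one block row; everything that remains is routine block-matrix bookkeeping.
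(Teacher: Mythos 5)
Your proposal is correct and follows essentially the same route as the paper: interpret the no-incoming-edge hypothesis as making the $i$-th block row of $\mathcal{A}$ vanish off the diagonal, then lift a PBH left eigenvector of the node to a block-supported left eigenvector of $\mathcal{A}$ annihilated by $\mathcal{B}$. The only (harmless) difference is that you handle the two failure modes ($\delta_i=0$ and $(A_i,B_i)$ uncontrollable) uniformly through one eigenvector construction, and you are slightly more careful about a possible self-loop via $A_i+\beta_{ii}H_iC_i$, whereas the paper treats the $\delta_i=0$ case separately with the PBH rank form and takes $\beta_{ii}=0$.
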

	
	\begin{proof}
		Suppose there exists a node, say $i$ with no in-coming edges.
		That is, $\beta_{ij}=0,\forall \, j=1,2,...,N$. Then,  the $i$-th block row of $\mathcal{A}$ becomes $$\begin{bmatrix}0&...&0&A_i&0&...&0\end{bmatrix}$$
		Suppose that $(A_i,B_i)$ is not controllable. Then, by the PBH eigenvector test (Theorem \ref{control} (ii)),  there exists a non-zero vector $v\in \mathbb{R}^{n_i}$ and a scalar $\lambda$ such that 
		\begin{align*}
			vA_i=\lambda v,\
			vB_i=0
		\end{align*}
		Then, the $1\times \mathcal{N}$ non-zero vector 
		$$V=[\underbrace{0}_{1\times n_1}\cdots\underbrace{0}_{ 1\times n_{i-1}}\underbrace{v}_{1\times n_i}\  \underbrace{0}_{1\times n_{i+1}}\cdots\underbrace{0}_{1\times n_N}]$$ 
		with $v$ in the $i-$th position will be such that:
		\begin{align*}
			V\mathcal{A}=\lambda V,\
			V\mathcal{B}=0
		\end{align*}
		That is, $(\mathcal{A},\mathcal{B})$ is an uncontrollable pair.\par 
		\noindent Further, suppose that node $i$ does not have external control input, i.e., $\delta_i=0$. Then, $$\begin{bmatrix}0&...&A_i-\lambda I&0&...&0|0&...&0\end{bmatrix}$$ is the $i$-th block row of the matrix $[\mathcal{A}-\lambda I|\mathcal{B}]$. Now, since $rank(A_i-\lambda I)<n$, $rank [\mathcal{A}-\lambda I|\mathcal{B}]<\mathcal{N}$ which implies that system \eqref{eq2}-\eqref{eq2-1} is not controllable.
	\end{proof}

	\begin{thm}\label{thm6}\cite{A.Thomas}
		If there exists a node in the networked system, say $i$ with no external control input, then for the controllability of the networked system \eqref{eq2}-\eqref{eq2-1} it is necessary that $(A_i,H_i)$ is a controllable pair.
	\end{thm}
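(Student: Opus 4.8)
The plan is to prove the contrapositive by the PBH eigenvector test (Theorem \ref{control}(ii)), mirroring the argument already used for Theorem \ref{thm5} with $H_i$ playing the role that $B_i$ plays there. I assume node $i$ has no external control input, so that $\delta_i=0$, and I suppose that $(A_i,H_i)$ is \emph{not} a controllable pair. The goal is to exhibit a nonzero left eigenvector $V$ of $\mathcal{A}$ that is annihilated by $\mathcal{B}$, which by the PBH test forces the networked system to be uncontrollable.

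Since $(A_i,H_i)$ fails the PBH eigenvector test, there exist a scalar $\lambda$ and a nonzero row vector $w\in\mathbb{C}^{1\times n_i}$ satisfying $wA_i=\lambda w$ and $wH_i=0$. I would then embed $w$ into the $i$-th block position and take $V=[\underbrace{0}_{1\times n_1}\cdots\underbrace{w}_{1\times n_i}\cdots\underbrace{0}_{1\times n_N}]\in\mathbb{C}^{1\times\mathcal{N}}$, which is nonzero because $w\neq 0$.

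The key observation is that every block in the $i$-th block row of $\mathcal{A}$ (as displayed in the proof of Theorem \ref{thm2}) carries $H_i$ as a left factor in its coupling part: the off-diagonal $(i,\ell)$ block equals $\beta_{i\ell}H_iC_\ell$, and the diagonal block equals $A_i+\beta_{ii}H_iC_i$. Computing the $\ell$-th block of $V\mathcal{A}$, only the $i$-th block of $V$ contributes, so for $\ell\neq i$ the result is $\beta_{i\ell}(wH_i)C_\ell=0$, while for $\ell=i$ it is $wA_i+\beta_{ii}(wH_i)C_i=\lambda w$; hence $V\mathcal{A}=\lambda V$. Because $\mathcal{B}=blockdiag\{\delta_1B_1,\dots,\delta_NB_N\}$ and only the $i$-th block of $V$ is nonzero, $V\mathcal{B}=w\,\delta_iB_i=0$ by $\delta_i=0$. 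Thus $V$ is a nonzero left eigenvector of $\mathcal{A}$ with $V\mathcal{B}=0$, so $(\mathcal{A},\mathcal{B})$ is uncontrollable, establishing the stated necessity.

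I do not anticipate a genuine obstacle here; the single point requiring care is recognizing that the entire $i$-th block row of $\mathcal{A}$ factors on the left through $H_i$ apart from the $A_i$ summand, which is precisely what makes the condition $wH_i=0$ collapse all the coupling terms and leave $V$ an eigenvector supported on a single block. The computation is otherwise routine and structurally identical to the proof of Theorem \ref{thm5}.
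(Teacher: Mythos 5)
Your proposal is correct and follows the same route as the paper's proof: use the PBH failure of $(A_i,H_i)$ to produce a nonzero row vector $w$ with $wA_i=\lambda w$ and $wH_i=0$, embed it in the $i$-th block to get a left eigenvector of $\mathcal{A}$ killed by $\mathcal{B}$ since $\delta_i=0$. You in fact spell out the block-row computation $V\mathcal{A}=\lambda V$ more explicitly than the paper does, but the argument is identical.
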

	\begin{proof}
		Given, node $i$ does not have external control input, i.e., $\delta_i=0$. Suppose that the system \eqref{eq2}-\eqref{eq2-1} is controllable and $(A_i,H_i)$ is an uncontrollable pair. Then, by the PBH eigenvector test (Theorem \ref{control} (ii)), there exists a non-zero vector $v\in \mathbb{R}^{n_i}$ and a scalar $\lambda$ such that 
		\begin{align*}
			vA_i=\lambda v,\
			vH_i=0
		\end{align*} 
		Then the vector $V=[\underbrace{0}_{1\times n_1}\cdots\underbrace{0}_{1\times n_{i-1}}\ \underbrace{v}_{1\times n_i}\ \underbrace{0}_{1\times n_{i+1}}\cdots\ \underbrace{0}_{1\times n_N}]$ with $v$ in the $i-$th position will be such that:
		\begin{align*}V\mathcal{A}=\lambda V,\ V\mathcal{B}=0\end{align*}
		That is, system \eqref{eq2}-\eqref{eq2-1}  is uncontrollable. 
	\end{proof}

	\begin{thm}\label{thm7}\cite{A.Thomas}
		Let $\Delta^{+}$ denote the set of all nodes in the networked system \eqref{eq2}-\eqref{eq2-1} with external control inputs. If $N>\sum_{i\in \Delta^{+}}rank(B_i)$, then for the controllability of \eqref{eq2}-\eqref{eq2-1}, it is necessary that $(C_j,A_j+\beta_{jj}H_jC_j)$ is an observable pair for some $j, 1\leq j\leq N$.
	\end{thm}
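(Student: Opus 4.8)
The plan is to argue by contraposition: I would assume that the conclusion fails, i.e.\ that \emph{no} node pair $(C_j,A_j+\beta_{jj}H_jC_j)$ is observable, and show that, together with $N>\sum_{i\in\Delta^{+}}\mathrm{rank}(B_i)$, this forces $(\mathcal{A},\mathcal{B})$ to be uncontrollable. Working with the contrapositive is natural because ``unobservability of every node'' hands us, for free, a large family of eigenvectors of $\mathcal{A}$ whose source is exactly the output matrices $C_j$, and $C_j$ is the object the hypothesis controls.

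First I would extract these eigenvectors. For each $j$, since $(C_j,A_j+\beta_{jj}H_jC_j)$ is unobservable, the PBH observability test (Theorem~\ref{obsv}(ii)) yields a nonzero $w_j$ with $(A_j+\beta_{jj}H_jC_j)w_j=\mu_j w_j$ and $C_jw_j=0$. Padding with zeros, set $W_j=[\,0\;\cdots\;0\;w_j\;0\;\cdots\;0\,]^{T}$ with $w_j$ in block $j$. Reading off the $j$-th block column of $\mathcal{A}$, every off-diagonal entry has the form $\beta_{kj}H_kC_j$, so $\mathcal{A}W_j$ has $k$-th block $\beta_{kj}H_k(C_jw_j)=0$ for $k\neq j$ and $(A_j+\beta_{jj}H_jC_j)w_j=\mu_j w_j$ in block $j$; hence $\mathcal{A}W_j=\mu_j W_j$. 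Thus each hidden node produces a genuine eigenvector of $\mathcal{A}$ supported on its own block, and since the supports are disjoint the $N$ vectors $W_1,\dots,W_N$ are linearly independent. This is the same mechanism as in Theorem~\ref{thm4}, now read through the \emph{columns} of $\mathcal{A}$.

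The decisive step is to convert the count of these hidden directions into a failure of controllability. Here I would invoke the PBH controllability test (Theorem~\ref{control}(ii)) in the sharpened form: $(\mathcal{A},\mathcal{B})$ can be controllable only if the geometric multiplicity of every eigenvalue of $\mathcal{A}$ is at most $\mathrm{rank}(\mathcal{B})=\sum_{i\in\Delta^{+}}\mathrm{rank}(B_i)$, since a nonzero combination of left eigenvectors annihilating $\mathcal{B}$ destroys controllability, and the linear map $v\mapsto v\mathcal{B}$ on an eigenspace is injective only up to dimension $\mathrm{rank}(\mathcal{B})$. The $N$ independent eigenvectors $W_j$ must then be distributed among the values $\mu_j$ so that at most $\mathrm{rank}(\mathcal{B})<N$ of them share any single eigenvalue, which is the pressure point that $N>\sum_{i\in\Delta^{+}}\mathrm{rank}(B_i)$ is meant to exploit.

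I expect this last step to be the main obstacle. The $W_j$ are \emph{right} eigenvectors, whereas controllability is governed by \emph{left} eigenvectors annihilating $\mathcal{B}$, and when the $\mu_j$ are distinct the geometric-multiplicity bound alone does not close the argument: a hidden mode at an uncontrolled node can in principle still be reached indirectly, because its left eigenvector may acquire support on controlled blocks through the coupling terms $\beta_{kj}H_kC_j$. The crux of a complete proof is therefore to rule out that this indirect reachability accounts for all $N$ hidden directions once the number of independent input channels $\sum_{i\in\Delta^{+}}\mathrm{rank}(B_i)$ falls below $N$; making that quantitative is where the real work lies, and it is precisely here that the structural hypotheses relating the $H_kC_j$ blocks to the network topology must be brought to bear.
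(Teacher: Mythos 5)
Your proposal reconstructs the paper's own argument almost exactly, just phrased with vectors instead of ranks: the paper notes that unobservability of $(C_j,A_j+\beta_{jj}H_jC_j)$ makes the $j$-th block column of $\mathcal{A}-\lambda I$ rank deficient (which is precisely your identity $(\mathcal{A}-\lambda I)W_j=0$, using $\beta_{kj}H_k(C_jw_j)=0$), concludes $rank(\mathcal{A}-\lambda I)\leq \mathcal{N}-N$, and then invokes $N>\sum_{i\in\Delta^{+}}rank(B_i)$ to force $rank[\mathcal{A}-\lambda I\,|\,\mathcal{B}]<\mathcal{N}$. The obstacle you flag in your final paragraph is therefore not a shortfall of your write-up relative to the paper: it is exactly the step the paper's proof assumes away by writing a single $\lambda$ for all $N$ column blocks. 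Each node $j$ supplies its deficiency only at its own eigenvalue $\mu_j$, so for a fixed $\lambda$ one only gets $rank(\mathcal{A}-\lambda I)\leq \mathcal{N}-\#\{j:\mu_j=\lambda\}$, and the count $N$ is available only if all the hidden modes share one eigenvalue. In the homogeneous setting of Wang \emph{et al.} this is automatic (every node contributes the same unobservable mode of the same pair $(C,A)$), which is why the argument closes there; in the heterogeneous setting it does not.

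You should also know that the gap is not merely technical: without an additional hypothesis the statement fails, so no amount of work on your ``decisive step'' can succeed in general. Take $N=2$, $m=1$, $n_1=n_2=2$, $A_1=\bigl(\begin{smallmatrix}0&0\\0&1\end{smallmatrix}\bigr)$, $A_2=\bigl(\begin{smallmatrix}2&0\\0&3\end{smallmatrix}\bigr)$, $C_1=C_2=\bigl(\begin{smallmatrix}1&0\end{smallmatrix}\bigr)$, $B_1=\bigl(\begin{smallmatrix}1\\1\end{smallmatrix}\bigr)$, $H_2=\bigl(\begin{smallmatrix}1\\1\end{smallmatrix}\bigr)$, with $\beta_{21}=1$, all other $\beta_{ij}=0$, $\delta_1=1$, $\delta_2=0$. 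Both pairs $(C_j,A_j)$ are unobservable (with hidden modes at the distinct eigenvalues $1$ and $3$), and $N=2>1=\sum_{i\in\Delta^{+}}rank(B_i)$, yet
\begin{equation*}
\mathcal{Q}(\mathcal{A},\mathcal{B})=\begin{pmatrix}1&0&0&0\\1&1&1&1\\0&1&2&4\\0&1&3&9\end{pmatrix}
\end{equation*}
has nonzero (Vandermonde) determinant, so the networked system is controllable while no node is observable. The correct conclusion available from your $W_j$'s is the weaker one: if all the unobservable modes can be chosen with a \emph{common} eigenvalue $\lambda$, then the geometric multiplicity of $\lambda$ for $\mathcal{A}$ is at least $N>rank(\mathcal{B})$ and uncontrollability follows; any honest repair of the theorem must add such a hypothesis.
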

	
	\begin{proof}
		Given, $N>\sum_{i\in \Delta^{+}}rank(B_i)$. Suppose $(\mathcal{A},\mathcal{B})$ is a controllable pair and let $(C_j,A_j+\beta_{jj}H_jC_j)$ be unobservable $\forall\, j, 1\leq j \leq N$. $$[\beta_{1j}H_1C_j\,...\,A_j+\beta_{jj}H_jC_j-\lambda I\,...\,\beta_{Nj}H_NC_j]^T$$ is the $j$-th column block of the matrix $[\mathcal{A}-\lambda I|\mathcal{B}]$. The unobservability of $(C_j,A_j+\beta_{jj}H_jC_j)$ implies that there exists an $n_j\times 1$ vector $v(\neq 0)$ and a scalar $\lambda$ such that
		\begin{align*}(A_j+\beta_{jj}H_jC_j)v=\lambda v\ \text{and}\ C_jv=0\end{align*}
		Hence rank of the $j$-th column block $[\beta_{1j}H_1C_j...A_j+\beta_{jj}H_jC_j-\lambda I...\beta_{Nj}H_NC_j]^T$ reduces atleast by one, and hence $$\ rank(\mathcal{A}-\lambda I)\leq \mathcal{N}-N$$
		Further, since $N>\sum_{i\in \Delta^{+}}B_i$, $rank[\mathcal{A}-\lambda I|\mathcal{B}]<\mathcal{N}$, hence $(\mathcal{A},\mathcal{B})$ is an uncontrollable pair.
	\end{proof}

	For the homogeneous networked system \eqref{wang}-\eqref{wang2}, the necessary conditions obtained by Wang \textit{et al.} in \cite{L.Wang} can be derived as corollaries of the theorems in this section.
	\begin{cor}[Wang \textit{et al.}, Theorem 2, \cite{L.Wang}]
		If there exists one node without incoming edges, then to
		reach controllability of the homogeneous networked system , it is necessary
		that $(A,B)$ is a controllable pair and moreover an external control input is
		applied onto this node which has no incoming edges.
	\end{cor}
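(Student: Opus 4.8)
The plan is to derive this corollary as an immediate specialization of Theorem \ref{thm5}. First I would observe that the homogeneous networked system \eqref{wang}-\eqref{wang2} is precisely the special case of the general networked system \eqref{eq2}-\eqref{eq2-1} obtained by setting $n_i=n$, $A_i=A$, $B_i=B$, $C_i=C$, and $H_i=H$ for every $i$. Consequently Theorem \ref{thm5}, which was proved for the general heterogeneous model, applies verbatim to this situation.

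Next I would invoke the hypothesis: there is a node, say the $i$-th, with no incoming edge, i.e. $\beta_{ij}=0$ for all $j=1,\dots,N$. By Theorem \ref{thm5}, controllability of the networked system forces this node to be controllable under an external control. Reading off exactly what the proof of Theorem \ref{thm5} establishes, this phrase decomposes into two separate necessary requirements: the node's own pair $(A_i,B_i)$ must be a controllable pair (otherwise the vector $V$ built from the PBH eigenvector of $(A_i,B_i)$ witnesses uncontrollability of $(\mathcal{A},\mathcal{B})$), and the node must actually receive external control, i.e. $\delta_i\neq 0$ (otherwise the $i$-th block row of $[\mathcal{A}-\lambda I\,|\,\mathcal{B}]$ is rank-deficient and $\operatorname{rank}[\mathcal{A}-\lambda I\,|\,\mathcal{B}]<\mathcal{N}$).

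The final step is simply to translate these two requirements into the homogeneous language. Since $(A_i,B_i)=(A,B)$ for every node, the first requirement reads ``$(A,B)$ is a controllable pair,'' and the second reads ``an external control input is applied onto the node with no incoming edges.'' Together these are exactly the assertion of the corollary, so the proof concludes by citing Theorem \ref{thm5}. I do not expect any genuine obstacle here, as all the analytic content is inherited from Theorem \ref{thm5}; the only point requiring care is the bookkeeping interpretation of ``controllable under an external control'' as the conjunction of the two PBH-type conditions appearing in that proof, rather than as a single indivisible statement. The write-up should therefore amount to a one- or two-line reduction to Theorem \ref{thm5}.
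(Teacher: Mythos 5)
Your reduction is exactly what the paper intends: it states that the Wang \emph{et al.} conditions follow as corollaries of the theorems in that section, and your specialization of Theorem \ref{thm5} to the homogeneous case $(A_i,B_i)=(A,B)$, together with unpacking ``controllable under an external control'' into the two PBH-type conclusions actually established in that proof, is the correct and intended argument. No gaps; the write-up can indeed be a one- or two-line citation of Theorem \ref{thm5}.
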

	
	\begin{cor}[Wang \textit{et al.}, Theorem 3, \cite{L.Wang}]
		If there exists one node without external control inputs,
		then for the homogeneous networked system to be controllable, it is necessary
		that $(A,HC)$ is a controllable pair.
	\end{cor}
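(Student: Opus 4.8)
The plan is to run the same PBH-embedding argument used in the proof of Theorem \ref{thm6}, specialized to the homogeneous system \eqref{wang}-\eqref{wang2}. The decisive structural fact is that homogeneity makes every off-diagonal block of $\mathcal{A}$ equal to $\beta_{ij}HC$ and every diagonal block equal to $A+\beta_{ii}HC$, so the entire state-to-state coupling is carried by the single matrix $HC$; this is precisely why the pair that surfaces is $(A,HC)$ rather than $(A_i,H_i)$.

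I would argue contrapositively. Let $i$ be a node with no external control, so $\delta_i=0$, and suppose $(A,HC)$ is \emph{not} controllable. The PBH eigenvector test (Theorem \ref{control}(ii)) then supplies a scalar $\lambda$ and a nonzero row vector $v$ with
\[
vA=\lambda v,\qquad v\,HC=0 .
\]
Lift $v$ to the block row vector $V=[\,0\ \cdots\ 0\ \ v\ \ 0\ \cdots\ 0\,]$ carrying $v$ in the $i$-th slot and zeros elsewhere.

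The verification is the crux. Reading off the $i$-th block row of $\mathcal{A}$, the product $V\mathcal{A}$ has $k$-th block $\beta_{ik}(v\,HC)=0$ for every $k\neq i$, while its $i$-th block is $v(A+\beta_{ii}HC)=\lambda v+\beta_{ii}(v\,HC)=\lambda v$; hence $V\mathcal{A}=\lambda V$. Since $\mathcal{B}=blockdiag\{\delta_1B,\ldots,\delta_NB\}$ and $\delta_i=0$, the only block of $V\mathcal{B}$ that could survive is $v\,\delta_iB=0$, so $V\mathcal{B}=0$. As $V\neq 0$, the PBH test declares $(\mathcal{A},\mathcal{B})$ uncontrollable, and the contrapositive delivers the stated necessary condition.

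The single point requiring care—really the only obstacle—is recognizing why the condition lands on $HC$ and not on $H$. Because all off-diagonal blocks are scalar multiples $\beta_{ik}HC$ of the \emph{same} matrix $HC$, the lone annihilator $v\,HC=0$ knocks out every one of them at once. In the fully heterogeneous Theorem \ref{thm6} the distinct output matrices $C_j$ block this collapse, forcing the stronger annihilator $vH_i=0$ and the pair $(A_i,H_i)$; the shared $C$ in the homogeneous setting is exactly what sharpens the necessary condition to $(A,HC)$.
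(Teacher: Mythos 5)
Your proof is correct and runs the same PBH-lifting argument that the paper uses for Theorem \ref{thm6}; the paper itself states this corollary without a separate proof, presenting it as a specialization of that theorem. Your closing observation is in fact the one point worth making explicit: a literal specialization of Theorem \ref{thm6} would only yield that $(A,H)$ must be controllable, which is a \emph{weaker} necessary condition than $(A,HC)$ being controllable (since $vH=0$ implies $vHC=0$ but not conversely), so the corollary genuinely requires rerunning the argument with the annihilator $vHC=0$, which is exactly what homogeneity of the output matrices permits. Your write-up supplies that missing step correctly.
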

	
	\begin{cor}[Wang \textit{et al.}, Theorem 4, \cite{L.Wang}]
		If the number of nodes with external control inputs is $m$,
		and $N>m\cdot rank(B)$, then for the homogeneous networked system to be
		controllable, it is necessary that $(A,C)$ is an observable pair.
	\end{cor}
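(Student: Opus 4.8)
The plan is to read this corollary off directly from Theorem \ref{thm7} after specializing to the homogeneous system \eqref{wang}-\eqref{wang2}. First I would record the homogeneous data: $A_i=A$, $B_i=B$, $C_i=C$, $H_i=H$ and $n_i=n$ for every $i$, and I would invoke the standing convention in \cite{L.Wang} that the network carries no self-loops, so that $\beta_{jj}=0$ for all $j$. Under this convention the diagonal blocks of $\mathcal{A}$ reduce to $A_j+\beta_{jj}H_jC_j=A$, and hence the pair $(C_j,A_j+\beta_{jj}H_jC_j)$ appearing in Theorem \ref{thm7} is the single pair $(C,A)$, the same for every index $j$.

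Next I would match the rank hypotheses. Since $B_i=B$ for all $i$, we have $rank(B_i)=rank(B)$; and if exactly $m$ nodes are driven by an external input, then $|\Delta^{+}|=m$ and therefore $\sum_{i\in\Delta^{+}}rank(B_i)=m\cdot rank(B)$. Thus the hypothesis $N>m\cdot rank(B)$ of the corollary is literally the hypothesis $N>\sum_{i\in\Delta^{+}}rank(B_i)$ required by Theorem \ref{thm7}.

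With these two identifications in place, Theorem \ref{thm7} applies verbatim: controllability of the homogeneous networked system forces $(C_j,A_j+\beta_{jj}H_jC_j)$ to be an observable pair for some $j$, and by the first step this is exactly the statement that $(C,A)$ (equivalently $(A,C)$ in the notation of the corollary) is observable. I expect no genuine obstacle beyond bookkeeping; the only point deserving an explicit word is the reduction $A_j+\beta_{jj}H_jC_j=A$, which is what collapses the node-indexed \enquote{for some $j$} conclusion of Theorem \ref{thm7} into the single pair $(A,C)$ and which rests on the absence of self-loops in the homogeneous model.
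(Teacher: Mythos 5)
Your derivation is correct and is exactly the specialization the paper intends: Theorem \ref{thm7} with $A_i=A$, $B_i=B$, $C_i=C$, $H_i=H$, so that $\sum_{i\in\Delta^{+}}rank(B_i)=m\cdot rank(B)$ and the node-indexed conclusion collapses to the single pair $(C,A)$. The only point you lean on that the paper does not strictly require is the no-self-loop convention; even if $\beta_{jj}\neq 0$, the pair $(C,A+\beta_{jj}HC)$ is observable if and only if $(C,A)$ is (output injection preserves the unobservable subspace, since $Cv=0$ and $Av=\lambda v$ give $(A+\beta_{jj}HC)v=\lambda v$), so the conclusion is unaffected.
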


	\section{Conclusion}\label{con}
	In this work we have studied the controllability and observability of a networked system $(\mathcal{A},\mathcal{B})$ with individual node dynamics given by $(A_i, B_i, C_i)$. The node dimensions as well as the inner-coupling matrices, which describe the inner-interactions among the nodes, are distinct. The nodes are connected in the network topology $(L,\Delta)$. We have obtained two necessary and sufficient  conditions for controllability and a necessary and sufficient  condition for observability of the network. The second necessary and sufficient  condition for controllability is illustrative in nature as it brings forth the relation between controllability and node-specific factors like nodal dynamics, inner-interactions among nodes as well as the network topology. The condition for observability states that the underlying graph topology has no influence in determining observability of the network. Further, the results state that under certain conditions on the network topology and external control inputs, the existence of a controllable node, the existence of a controllable pair $(A_i,H_i)$ and the existence of an observable node are necessary for controllability of the network, nevertheless these conditions are not sufficient. In future works, we envisage to formulate further efficient necessary and sufficient  conditions for controllability of heterogeneous networks with distinct node dimensions.

	\appendix
	\section{Specific Network Topologies}
	Here, we consider a few cases of network topologies, namely path, cycle, star and wheel where controllability conditions are easily verifiable.\\
	
	\textbf{(i) Path Network Topology} 
	Consider the path network $P_N$ with $N$ nodes. The networked system $(\mathcal{A},\mathcal{B})$ under $P_N$ is:
	\begin{align*}
		\mathcal{A}&=\begin{pmatrix}
			A_1&0&\cdots&0\\
			\beta_{21}H_2C_1&A_2&\cdots&0\\
			\vdots&\ddots&\quad&\vdots\\
			0&\cdots&\beta_{nn-1}H_nC_{n-1}&A_N
		\end{pmatrix}\\
		\mathcal{B}&=diag.\{\delta_1B_1,\delta_2B_2,...,\delta_NB_N\}
	\end{align*}
	\begin{figure}[h!]\label{fig12}
		\centering
		\begin{tikzpicture}
			\Vertex[label=$v_1$]{a1};
			\Vertex[x=2,label=$v_2$]{a2};
			\Vertex[x=4,label=$v_3$]{a3};
			\Edge[Direct](a1)(a2);
			\Edge[Direct](a2)(a3);
		\end{tikzpicture} 
		\caption{$P_3$, the path graph on $3$ nodes}
	\end{figure}
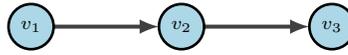
	
	\begin{cor}\cite{A.Thomas}
		For a networked system $(\mathcal{A},\mathcal{B})$ wherein the nodes are connected in path topology to be controllable, it is necessary that $(A_1,B_1)$ is a controllable pair.
	\end{cor}
	
	\begin{proof}
		For nodes aligned under path topology, the first node does not have an incoming edge. Also, there are no self-loops, i.e., $\beta_{ii}=0$. The corollary follows directly from Theorem \ref{thm5}.
	\end{proof}
	\textbf{(ii) Cycle Network Topology} 
	Consider an $N-$cycle $C_N$. Here, $$\mathcal{A}=\begin{pmatrix}
		A_1&0&\cdots&\beta_{1N}H_1C_N\\
		\beta_{21}H_2C_1&A_2&\cdots&0\\
		0&\beta_{32}H_3C_2&A_3&0\\
		\vdots&\ddots&\cdots&0\\
		0&\cdots&\beta_{NN-1}H_NC_{N-1}&A_N
	\end{pmatrix}$$ and $\mathcal{B}=diag\{\delta_1B_1,\cdots,\delta_NB_N\}$.
	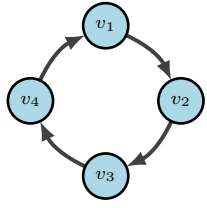
\begin{figure}[h!]\label{fig11}
		\centering
		\begin{tikzpicture}
			\Vertex[x=0,y=1,label=$v_1$]{a1}
			\Vertex[x=1,y=0,label=$v_2$]{a2}
			\Vertex[x=0,y=-1,label=$v_3$]{a3}
			\Vertex[x=-1,y=0,label=$v_4$]{a4}
			\Edge[Direct, bend=20](a1)(a2)
			\Edge[Direct, bend=20](a2)(a3)
			\Edge[Direct, bend=20](a3)(a4)
			\Edge[Direct, bend=20](a4)(a1)
		\end{tikzpicture} 
		\caption{$C_4$, the cycle graph on $4$ nodes}
	\end{figure}
	
	\begin{cor}
		Consider an $N-$ node networked system under cycle topology. Let $\sigma(A_{i})=\{ \mu_{i}^{1}, \mu_{i}^{2},\cdots,\mu_{i}^{q_i}\}, 1\leq i\leq N$ and $\zeta_{ij}^k$ denote the left eigenvectors of $A_{i}$ corresponding to the eigenvalue $\mu_i^j$ where $1\leq k\leq \gamma_{ij}$, $\gamma_{ij}$ denotes the geometric multiplicity of $\mu_i^j$ as an eigenvalue of $A_{i}$. Suppose $H_{j+1}C_{j}=0, j=1,\cdots, N$ and $\zeta_{1N}^k$ is orthogonal to $H_{1}C_{N}$, $k=1,\cdots,\gamma_{ij}$. The system is controllable if and only if:
		\begin{itemize}
			\item[(i)] $(A_i,B_i)$ is a controllable pair $\forall i, 1\leq i\leq N$
			\item [(ii)] Each node is under external control
		\end{itemize} 
		
		\begin{proof}
			In cycle topology, $\beta_{ii}=0,i=1,2,\ldots,N$. Hence, the corollary follows from Theorem \ref{thm3}.
		\end{proof}
		
	\end{cor}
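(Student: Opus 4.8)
The plan is to specialize Theorem \ref{thm3} to the cycle $C_N$, so that the whole argument reduces to checking that the structural hypotheses of that theorem are satisfied by the cycle topology. The first thing I would record is the defining feature of a cycle: there are no self-loops, so $\beta_{ii}=0$ for every $i$, and consequently each diagonal block $A_i+\beta_{ii}H_iC_i$ collapses to $A_i$. This is exactly why the corollary is phrased in terms of $\sigma(A_i)$ and the left eigenvectors $\zeta_{ij}^k$ of $A_i$ rather than of $A_i+\beta_{ii}H_iC_i$, and why conclusion (i) reads $(A_i,B_i)$ rather than $(A_i+\beta_{ii}H_iC_i,B_i)$. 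The substitution $\beta_{ii}=0$ makes the hypotheses and conclusions of Theorem \ref{thm3} coincide termwise with those stated in the corollary.

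Next I would read off the nonzero off-diagonal blocks of $\mathcal{A}$ from the displayed cycle matrix. The edges of $C_N$ are $j\to j+1$ for $j=1,\dots,N-1$ together with $N\to 1$, so the only nonzero off-diagonal entries of $L$ are $\beta_{j+1,j}$ (for $j=1,\dots,N-1$) and $\beta_{1N}$. The blocks $\beta_{j+1,j}H_{j+1}C_j$ sit strictly below the diagonal, since their row index $i=j+1$ exceeds the column index $j$, whereas the single wrap-around block $\beta_{1N}H_1C_N$ sits strictly above it, since its row index $1$ is less than its column index $N$. This is precisely the dichotomy that Theorem \ref{thm3} treats separately: the $i>j$ blocks versus the $i<j$ blocks.

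With this identification, verifying the two hypotheses of Theorem \ref{thm3} is mechanical. For the $i>j$ case the hypothesis requires $H_iC_j=0$ whenever $\beta_{ij}\neq 0$; here this is exactly the assumption $H_{j+1}C_j=0$, which annihilates every subdiagonal block and renders $\mathcal{A}$ block upper triangular, whence $\sigma(\mathcal{A})=\bigcup_{i=1}^N\sigma(A_i)$ by Theorem \ref{thm2}. For the $i<j$ case the only surviving block is $(1,N)$, and the hypothesis requires the left eigenvectors of the corresponding diagonal block $A_1$ to be orthogonal to $H_1C_N$; this is the stated orthogonality of $\zeta_{1N}^k$ to $H_1C_N$. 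Having checked both hypotheses, I would invoke Theorem \ref{thm3} directly, using $A_i+\beta_{ii}H_iC_i=A_i$, to conclude that the system is controllable if and only if $(A_i,B_i)$ is controllable for each $i$ and every node carries an external control input.

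The step I expect to demand the most care is the bookkeeping in the second and third paragraphs: confirming that the cyclic wrap-around edge $N\to 1$ yields the unique above-diagonal block and therefore falls under the orthogonality hypothesis rather than the vanishing-block hypothesis, so that no structural condition of Theorem \ref{thm3} is left unverified. Everything else is a transparent consequence of setting $\beta_{ii}=0$ in the general result.
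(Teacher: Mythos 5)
Your proposal is correct and follows the same route as the paper, whose entire proof is the observation that $\beta_{ii}=0$ in a cycle so that Theorem \ref{thm3} applies verbatim with $A_i+\beta_{ii}H_iC_i=A_i$. Your additional bookkeeping --- identifying the subdiagonal blocks $\beta_{j+1,j}H_{j+1}C_j$ as the $i>j$ case and the wrap-around block $\beta_{1N}H_1C_N$ as the sole $i<j$ case --- is exactly the verification the paper leaves implicit.
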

	
	\textbf{(iii) Star Network Topology} 
	Consider the star network $S_N$ with $N$ nodes and with node $l$ as the star, i.e., the node of degree $N-1$. Then the networked system $(\mathcal{A},\mathcal{B})$ under $S_N$ is:
	
	\begin{align*}
		\mathcal{A}&=\begin{pmatrix}
			A_1&0&\cdots&\beta_{1l}H_1C_l&\cdots&0\\
			0&A_2&\cdots&\beta_{2l}H_2C_l&\cdots&0\\
			\vdots&\quad&\vdots&\vdots&\vdots\\
			0&\cdots&0&\beta_{Nl}H_NC_l&\cdots&A_N
		\end{pmatrix}\\
		\mathcal{B}&=diag\{\delta_1B_1,\cdots,\delta_NB_N\}
	\end{align*}
	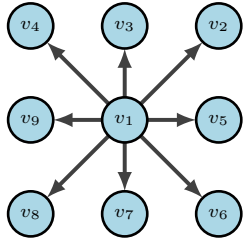
\begin{figure}[h!]\label{fig12}
		\centering
		\begin{tikzpicture}
			\Vertex[x=0,y=0,label=$v_1$]{a1}
			\Vertex[x=1.25,y=1.25,label=$v_2$]{a2}
			\Vertex[x=0,y=1.25,label=$v_3$]{a3}
			\Vertex[x=-1.25,y=1.25,label=$v_4$]{a4}
			\Vertex[x=1.25,y=0,label=$v_5$]{a5}
			\Vertex[x=1.25,y=-1.25,label=$v_6$]{a6}
			\Vertex[x=0,y=-1.25,label=$v_7$]{a7}
			\Vertex[x=-1.25,y=-1.25,label=$v_8$]{a8}
			\Vertex[x=-1.25,y=0,label=$v_9$]{a9}
			\Edge[Direct](a1)(a2)
			\Edge[Direct](a1)(a3)
			\Edge[Direct](a1)(a4)
			\Edge[Direct](a1)(a5)
			\Edge[Direct](a1)(a6)
			\Edge[Direct](a1)(a7)
			\Edge[Direct](a1)(a8)
			\Edge[Direct](a1)(a9)
		\end{tikzpicture} 
		\caption{$S_9$, the star graph on $9$ nodes}
	\end{figure}
	
	\begin{cor}\label{cor-st1}
		Consider an $N-$ node networked system under star topology with node $l(1\leq l\leq N)$ as the star. Let $\sigma(A_{i})=\{ \mu_{i}^{1}, \mu_{i}^{2},\cdots,\mu_{i}^{q_i}\}, 1\leq i\leq N$ and $\zeta_{ij}^k$ denote the left eigenvectors of $A_{i}$ corresponding to the eigenvalue $\mu_i^j$ where $1\leq k\leq \gamma_{ij}$, $\gamma_{ij}$ denotes the geometric multiplicity of $\mu_i^j$ as an eigenvalue of $A_{i}$. Suppose $H_{i}C_{l}=0, \forall i>l$ and $\zeta_{il}^k$ is orthogonal to $H_{i}C_{l}, \forall i<l$, $k=1,\cdots,\gamma_{ij}$. The system is controllable if and only if:
		\begin{itemize}
			\item[(i)] $(A_i,B_i)$ is a controllable pair $\forall i, 1\leq i\leq N$
			\item [(ii)] each node is under external control
		\end{itemize} 
	\end{cor}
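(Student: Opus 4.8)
The plan is to obtain this corollary as a direct specialization of Theorem \ref{thm3}, so the work consists entirely in checking that the two structural hypotheses of that theorem hold for the star topology. First I would read off the block structure of $\mathcal{A}$ displayed above: since $S_N$ has no self-loops we have $\beta_{ii}=0$ for all $i$, so each diagonal block reduces from $A_i+\beta_{ii}H_iC_i$ to $A_i$. Consequently the eigenvalues $\mu_i^j$, the geometric multiplicities $\gamma_{ij}$, and the left eigenvectors $\zeta_{ij}^k$ invoked in the hypotheses are exactly those of $A_i$, matching the statement of the corollary. The only nonzero off-diagonal blocks are the couplings $\beta_{il}H_iC_l$ sitting in the $l$-th block column (position $(i,l)$, $i\neq l$); every other off-diagonal block vanishes because no edge joins two non-central nodes.

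Next I would verify the lower-triangular hypothesis of Theorem \ref{thm3}, namely that $H_iC_j=0$ whenever $\beta_{ij}\neq 0$ and $i>j$. The nonzero couplings all have $j=l$, so the relevant cases are precisely those with $i>l$; for these the hypothesis $H_iC_l=0,\ \forall\,i>l$ supplies exactly what is needed, while for $j\neq l$ the block is already zero and the condition is vacuous. Then I would verify the upper-triangular hypothesis, that $\zeta_{ij}^k$ is orthogonal to $H_iC_j$ for $i<j$. Again the only active blocks have $j=l$ and $i<l$, and the assumed orthogonality of $\zeta_{il}^k$ to $H_iC_l$ for $i<l$ covers these; for $j\neq l$ the block $H_iC_j$ vanishes and orthogonality is automatic.

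With both hypotheses of Theorem \ref{thm3} confirmed, and using $\beta_{ii}=0$ to identify $A_i+\beta_{ii}H_iC_i$ with $A_i$, I would invoke Theorem \ref{thm3} to conclude that $(\mathcal{A},\mathcal{B})$ is controllable if and only if each $(A_i,B_i)$ is a controllable pair and every node is under external control, which is exactly the claimed equivalence. The only point requiring care — and hence the main (if modest) obstacle — is the bookkeeping that aligns the single active block column $j=l$ of the star with the $i>j$ versus $i<j$ dichotomy of Theorem \ref{thm3}: one must check that the hypotheses $H_iC_l=0$ (for $i>l$) and $\zeta_{il}^k\perp H_iC_l$ (for $i<l$) collectively account for all nonvanishing blocks, and that all remaining off-diagonal blocks are genuinely zero so that the corresponding conditions are satisfied trivially.
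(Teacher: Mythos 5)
Your proposal is correct and follows exactly the paper's own route: the paper's proof is the one-line observation that $\beta_{ii}=0$ in star topology, so the corollary is a direct specialization of Theorem \ref{thm3}. You simply spell out in more detail the bookkeeping (that the only nonzero off-diagonal blocks sit in block column $l$, and that the stated hypotheses on $H_iC_l$ cover precisely those blocks) which the paper leaves implicit.
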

	
	\begin{proof}
		In star topology, $\beta_{ii}=0,i=1,2,\ldots,N$. Hence, the corollary follows from Theorem \ref{thm3}.
	\end{proof}
	
	\textbf{(iv) Wheel Network Topology} 
	Consider the wheel network $W_N$ with $N$ nodes and without loss of generality let node $1$ be the hub, i.e., the star node of $S_{N}$ underlying $W_N$. Then the networked system $(\mathcal{A},\mathcal{B})$ under $W_N$ is:
	\begin{equation*}
		\scriptstyle\mathcal{A}=\begin{pmatrix}
			A_1&0&\cdots&\cdots&\beta_{1N}H_1C_N\\
			\beta_{21}H_2C_1&A_2&\cdots&\cdots&0\\
			\vdots &\ddots& \ddots&\cdots&0\\
			\vdots&\vdots&\ddots& \ldots &\vdots \\
			\beta_{N1}H_NC_1 &0 &\ldots& \beta_{N(N-1)}H_NC_{N-1}&A_N
	\end{pmatrix}\end{equation*}
	\begin{equation*}	\mathcal{B}=diag.\{\delta_1B_1,\delta_2B_2,...,\delta_NB_N\}
	\end{equation*}
	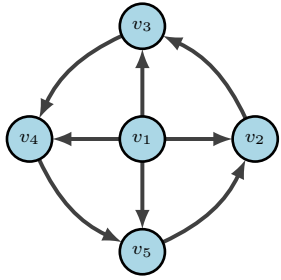
\begin{figure}[h!]\label{fig13}
		\centering
		\begin{tikzpicture}
			\Vertex[x=0,y=0,label=$v_1$]{a1};
			\Vertex[x=1.5,y=0,label=$v_2$]{a2};
			\Vertex[x=0,y=1.5,label=$v_3$]{a3};
			\Vertex[x=-1.5,y=0,label=$v_4$]{a4};
			\Vertex[x=0,y=-1.5,label=$v_5$]{a5};
			\Edge[Direct](a1)(a2);
			\Edge[Direct](a1)(a3);
			\Edge[Direct](a1)(a4);
			\Edge[Direct](a1)(a5);
			\Edge[bend=-20,Direct](a2)(a3);
			\Edge[bend=-20,Direct](a3)(a4);
			\Edge[bend=-20,Direct](a4)(a5);
			\Edge[bend=-20,Direct](a5)(a2);
		\end{tikzpicture} 
		\caption{$W_5$, the wheel graph on $5$ nodes}
	\end{figure}
	
	\begin{cor}
		Consider an $N-$ node networked system under wheel topology with node $l(1\leq l\leq N)$ as the hub. Let $\sigma(A_{i})=\{ \mu_{i}^{1}, \mu_{i}^{2},\cdots,\mu_{i}^{q_i}\}, 1\leq i\leq N$ and $\zeta_{ij}^k$ denote the left eigenvectors of $A_{i}$ corresponding to the eigenvalue $\mu_i^j$ where $1\leq k\leq \gamma_{ij}$, $\gamma_{ij}$ denotes the geometric multiplicity of $\mu_i^j$ as an eigenvalue of $A_{i}$. Suppose $H_iC_1=0, i=3,\cdots, N$, $H_{j+1}C_j=0, j=1,\cdots,N$ and $v_1\perp H_1C_N$. Then the system is controllable if and only if:
		\begin{itemize}
			\item[(i)] $(A_i,B_i)$ is a controllable pair $\forall i, 1\leq i\leq N$
			\item [(ii)] each node is under external control
		\end{itemize} 
	\end{cor}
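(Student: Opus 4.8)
The plan is to reduce this corollary directly to Theorem \ref{thm3}, just as the star and cycle corollaries were derived. The key observation is that in the wheel topology $W_N$ with node $1$ as the hub, there are no self-loops, so $\beta_{ii}=0$ for all $i$, which means each diagonal block of $\mathcal{A}$ is simply $A_i$ (rather than $A_i + \beta_{ii}H_iC_i$). Consequently $\sigma(A_i + \beta_{ii}H_iC_i) = \sigma(A_i)$ and the left eigenvectors $\zeta_{ij}^k$ appearing in Theorem \ref{thm3} are exactly the left eigenvectors of $A_i$, matching the hypotheses as stated in the corollary.

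The main work is to verify that the structural hypotheses of the corollary are precisely the specialization of the two hypotheses of Theorem \ref{thm3} to the wheel adjacency pattern. First I would identify which off-diagonal blocks $\beta_{ij}H_iC_j$ are potentially nonzero in $W_N$: the spokes give $\beta_{1j}\ne 0$ for $j\ge 2$ (edges from hub to rim) and $\beta_{i1}\ne 0$ for $i\ge 2$ (or whichever orientation the model uses), while the rim cycle gives $\beta_{j+1,j}\ne 0$ together with the closing edge $\beta_{1N}\ne 0$ (equivalently $\beta_{N,1}$). I would then check that Theorem \ref{thm3}'s condition ``$\beta_{ij}\ne 0 \Rightarrow H_iC_j = 0$ for all $i>j$'' becomes exactly the stated requirements $H_{j+1}C_j = 0$ (the sub-diagonal rim edges, $i = j+1 > j$) and $H_iC_1 = 0$ for $i = 3,\dots,N$ (the lower spoke blocks with $i>1$, excluding $i=2$ which is already covered by the rim condition $H_{j+1}C_j$ at $j=1$). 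Symmetrically, the condition ``$\zeta_{ij}^k \perp H_iC_j$ for $i<j$'' reduces, for the only relevant above-diagonal nonzero block, to the requirement $v_1 \perp H_1C_N$ (the closing edge, with $i=1 < j=N$); the spoke blocks $\beta_{1j}H_1C_j$ for $i=1<j$ must likewise be handled, so I would confirm that the hypothesis as written captures (or the spokes vanish under) these cases.

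Once the block-triangular structure is established and the orthogonality conditions are matched term-by-term, Theorem \ref{thm2} guarantees $\sigma(\mathcal{A}) = \bigcup_i \sigma(A_i)$ with the left eigenvectors being the padded vectors $\eta_{ij}^k$, and Theorem \ref{thm3} then yields controllability if and only if each $(A_i,B_i)$ is controllable and every $\delta_i \ne 0$. I would write the proof as a short reduction: \emph{``In wheel topology, $\beta_{ii}=0$ for all $i$, and the stated conditions $H_iC_1=0\,(i\ge 3)$, $H_{j+1}C_j=0$, and $v_1\perp H_1C_N$ are exactly the hypotheses of Theorem \ref{thm3} for the wheel adjacency pattern; hence the corollary follows from Theorem \ref{thm3}.''} The one subtle point I expect to be the main obstacle is the bookkeeping of the spoke edges: I must make sure every nonzero $\beta_{ij}$ in $W_N$ is accounted for by exactly one of the three stated hypotheses, and in particular confirm that the indices in $H_iC_1=0,\ i=3,\dots,N$ together with the rim condition cover all lower spoke-and-rim blocks, with no nonzero block left unconstrained. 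If the orientation of the spokes or the indexing of $\beta_{1N}$ versus $\beta_{N1}$ does not align cleanly, a brief remark reconciling the convention would be needed before invoking Theorem \ref{thm3}.
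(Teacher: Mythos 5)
Your proposal is correct and follows essentially the same route as the paper: the paper's proof is simply the observation that $\beta_{ii}=0$ in wheel topology so the corollary is an immediate specialization of Theorem \ref{thm3}. The extra bookkeeping you plan (matching each nonzero block of $\mathcal{A}$ in $W_N$ to one of the stated hypotheses, and noting that the spokes are oriented outward from the hub so they sit below the diagonal) is sound and in fact more careful than what the paper writes.
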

	
	\begin{proof}
		In wheel topology, $\beta_{ii}=0,i=1,2,\ldots,N$. Hence, the corollary follows from Theorem \ref{thm3}.
	\end{proof}

	\section*{Acknowledgment}
	The first and second authors thank the University Grants Commission, India and the Council for Scientific and Industrial Research, India for the financial support, respectively. We thank the Department of Mathematics, Indian Institue of Space Science and Technology, Thiruvananthapuram, Kerala, India for providing all the necessary facilities to pursue this research work.

\end{document}